\newtheorem{thm}{Theorem}[section]
\theoremstyle{plain}
\newtheorem{cor}[thm]{Corollary}
\newtheorem{deff}[thm]{Definition}
\newtheorem{lem}[thm]{Lemma}
\newtheorem{prop}[thm]{Proposition}
\newtheorem{exa}[thm]{Example}
\newtheorem{note}[thm]{Note}
\long\def\symbolfootnote[#1]#2{\begingroup\def\thefootnote{\fnsymbol{footnote}}\footnote[#1]{#2}\endgroup}
\begin{document}
\title[Wijsman Topology]{On normality of the Wijsman Topology}
\author{L\!'ubica Hol\'a and Branislav Novotn\'y}
\address{Mathematical Institute, Slovak Academy of Sciences, \v Stef\'anikova 49, SK-814 73 Bratislava, Slovakia}
\email{lubica.hola@mat.savba.sk, branislav.novotny@mat.savba.sk}
\keywords{Wijsman topology, cardinal invariant, normality}
\subjclass[2010]{Primary 54A25, 54B20, 54D15; Secondary 54E35}
\date{}
\begin{abstract}
Let $(X,\rho)$ be a metric space and $(CL(X),W_\rho)$ be the hyperspace of all nonempty closed subsets of $X$ equipped with the Wijsman topology. The Wijsman topology is one of the most important classical hyperspace topologies. We give a partial answer to a question posed in \cite{maio} whether the normality of $(CL(X),W_\rho)$ is equivalent to its metrizability. If $(X,\rho)$ is a linear metric space, then $(CL(X),W_\rho)$ is normal if and only if $(CL(X),W_\rho)$ is metrizable. Some further results concerning normality of the Wijsman topology on $CL(X)$ are also proved.
\end{abstract}
\maketitle
\symbolfootnote[0]{Both authors were supported by VEGA 2/0047/10.}
\section{Introduction}


Let $(X,\rho)$ be a metric space and $(CL(X),W_\rho)$ be the hyperspace of all nonempty closed subsets of $X$ equipped with the Wijsman topology.
The Wijsman topology is now considered as a classical one. It is one of the most important hyperspace topologies. The Wijsman topology is finer than the Fell topology and weaker than the Vietoris topology and the Husdorff metric topology.

The Wijsman topology  $W_\rho$ is the weak topology (initial topology) determined by all distance functionals $\rho(x,\cdot):CL(X)\to [0,\infty)$, where $\rho(x,A)=inf\{\rho(x,a);a\in A\}$ \cite{beer}; i.e. $A_\lambda\to A$ iff $\rho(x,A_\lambda)\to\rho(x,A)$ for all $x\in X$.

This topology, or more precisely convergence, was introduced in 1966 by R. Wijsman for closed convex sets in $\mathbf{R}^n$ \cite{wij}. Since then the Wijsman topology was studied by many authors and found many applications; see e.g. \cite{beer2}, \cite{costantini}, \cite{holluc}, \cite{maio}, \cite{zsilinszky}.
In \cite{belelena} there is proved that for a metrizable space $(X,\tau)$, the Vietoris topology is the smallest topology containing all Wijsman topologies determined by metrics compatible with $\tau$ and for a metric space $(X,\rho)$, the proximal topology is the smallest topology containing all Wijsman topologies determined by metrics which are uniformly equivalent to $\rho$.
As a basic reference for the Wijsman topology we recommend \cite{beer}.


In this paper we study the normality and cardinal invariants of the Wijsman topology. Notice that the normality of the Vietoris topology on $CL(X)$ was studied by Keesling in \cite{keesling1}, \cite{keesling2}, but it was solved by Velichko in \cite{vel}; he proved that it is equivalent to the compactness of $X$. Hol\'a, Levi and Pelant proved in \cite{hollevpel} that the normality of the Fell topology on $CL(X)$ is equivalent to the local compactness and Lindel\"ofness of $X$.

In our paper we give a partial answer to a question posed in \cite{maio}:
\emph{It is known that if $(X,\rho)$ is a separable metric space, then $(CL(X),W_\rho)$ is metrizable and so paracompact and normal. Is the opposite true? Is $(CL(X),W_\rho)$ normal if and only if $(CL(X),W_\rho)$ is metrizable?}


\section{Preliminaries}

Let $s,e,c,d,nw,w,\psi w,\pi,\chi,\psi,\pi\chi,t,L$ be cardinal invariants of a topological space: spread, extent, cellularity, density, netweight, weight, pseudo weight, $\pi-$weight, character, pseudocharacter, $\pi-$character, tightness, and Lindel\"of number respectively, as defined in \cite{juhasz} and \cite{engelking}. All are greater or equal to $\aleph_0$.

Consider a topological space $(X,\tau)$ and $x\in X$. Arguments in cardinal functions will be denoted as 
 $f(x,X,\tau)$. We will omit specification of the topology $\tau$ when it will not be confusing. Points will be specified only for a point specific cardinal functions ($\psi,\chi,t,...$) and  in this case we have $f(X)=sup\{f(x,X);x\in X\}$. Every cardinal function has also a hereditary version $hf(X)=sup\{f(Y);Y$ is a subspace of $X\}$.

Now define some other cardinal functions. $|X|$ denotes the cardinality of $X$, $card(X)=\aleph_0+|X|$, $o(X)=\aleph_0+|\tau|$, the diagonal degree by $\Delta(X)=\aleph_0+min\{|\mathcal{G}|; \mathcal{G}$ is a family of open sets in $X\times X$ with $\cap\mathcal{G}$ equal to the diagonal in $X\times X\}$ \cite{hodel}.
For a Tychonoff space define the uniform weight by $u(X)=\aleph_0+min\{|\mathcal{W}|; \mathcal{W}$ is a base for compatible uniformity$\}$ \cite{engelking} and the weak weight by $ww(X)=min\{w(Y);$ there is a continuous bijection from $X$ onto a Tychonoff space $Y\}$ \cite{mccoy}.

A metric space $(X,\rho)$ is $\epsilon-$discrete iff for any distinct $x,y\in X$ holds $\rho(x,y)\geq\epsilon$; and it is uniformly discrete iff it is $\epsilon-$discrete for some $\epsilon>0$. In a metric space denote by $S(x,\epsilon)$ ($B(x,\epsilon)$) an open (closed) ball with the radius $\epsilon$ and the center $x$. $S(M,\epsilon)=\bigcup_{x\in M}S(x,\epsilon)$ and $B(M,\epsilon)=\bigcup_{x\in M}B(x,\epsilon)$. If we need to specify the metric $\rho$, we will write $S_\rho(x,\alpha), B_\rho(x,\alpha), S_\rho(M,\epsilon)$ and $B_\rho(M,\epsilon)$.

\begin{note}[{\cite[2.1.]{juhasz}, \cite[Fig. 1.]{hodel}, \cite[8.5.17.]{engelking}, \cite[IV.9.16.]{mccoy}}]\label{notecard}
	The diagram in the \emph{Figure} \ref{figcard} shows relations among cardinal invariants on a Tychonoff space. (Without specifying a point i.e. only f(X).) Functions connected by a line are comparable and the upper one is greater than or equal to the lower one. This is true also for a $T_1$ space, but one should omit those in boxes.
\end{note}
\begin{figure}[h]
	\includegraphics[width=0.7\textwidth,angle=270]{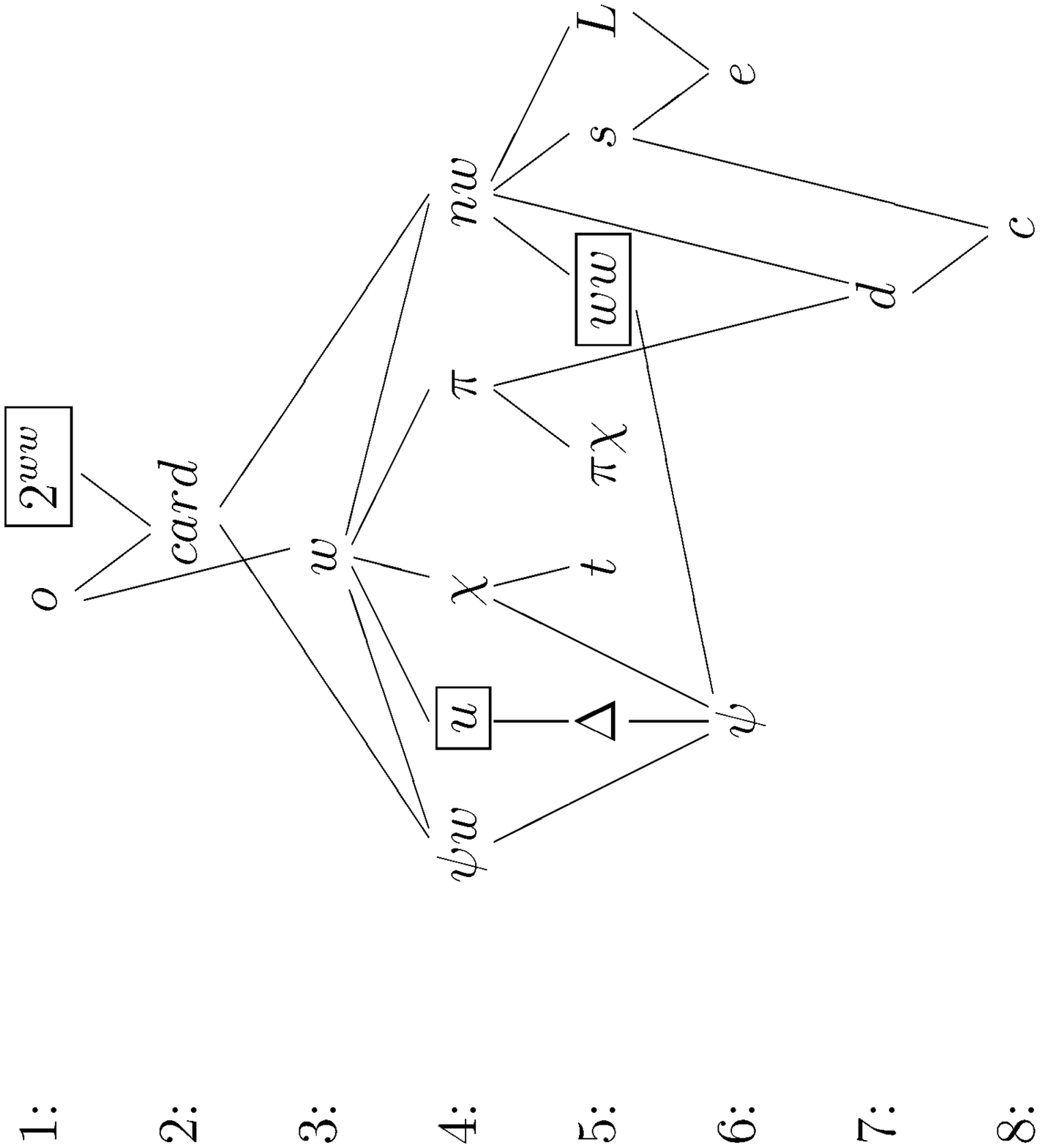}
\caption{}
\label{figcard}
\end{figure}

\begin{note}\label{notemetric}
For a metric space $X$ is $w(X)=nw(X)=\pi(X)=L(X)=s(X)=e(X)=d(X)=c(X)=sup\{card(D); D$ is a uniformly discrete subspace of $X\}$.
\end{note}
\section{Cardinal Invariants of the Wijsman Topology}

We will work on a metric space $(X,\rho)$ and its hyperspace $(CL(X),W_\rho)$.

 If $X$ is only a topological space we can consider the Vietoris topology $V$ on $CL(X)$. For $U\subset X$ denote $U^-=\{A\in CL(X);A\cap U\not=\emptyset\}$ and $U^+=\{A\in CL(X);A\subset U\}$. The family $\{U^-; U\text{ is open}\}$ ($\{U^+; U\text{ is open}\}$) is a subbase of $V^-$, the lower part of the Vietoris topology ($V^+$, the upper part of the Vietoris topology $V^+$). Together they form a subbase of $V$. Analogically we have the lower and upper parts of the Wijsman topology $W_\rho^-, W_\rho^+$ with the following subbases
\[\{S^-(x,\alpha); x\in E, \alpha\in Q^+\}\]
\[\{S^+(x,\alpha); x\in E, \alpha\in Q^+\}\]
respectively; where
\[S^-(x,\alpha)=\{A\in CL(X);\rho(x,A)<\alpha\}=S(x,\alpha)^-,\]
\[S^+(x,\alpha)=\{A\in CL(X);\rho(x,A)>\alpha\}=\bigcup_{\beta > \alpha}B(x,\beta)^{C+},\]
$E$ is a dense subset of $X$ and $Q^+$ is the set of all positive rationals. If need to specify the metric $\rho$, we will use notations $S^-_\rho(x,\alpha)$ and $S^+_\rho(x,\alpha)$.

We also have a subbase for a natural uniformity compatible with $W_\rho$: \[\{W_{x,n}; x\in E, n\in\omega\},\] where $W_{x,n}=\{(A,B)\in CL(X)^2;|\rho(x,A)-\rho(x,B)|<1/n\}$.
\bigskip

We will suppose that $(X,\rho)$ is a metric space and $CL(X)$ is equipped with $W_\rho$ if not stated explicitly otherwise. Notice that if $(X,\rho)$ is a metric space, then $W_\rho^- = V^-$ on $CL(X)$.
\bigskip

We immediately obtain:
\begin{prop}
	$w(CL(X))\leq d(X)$.
\end{prop}
We will derive now some upper estimates of $d(X)$ from more general results about the Vietoris topology.
\begin{lem}
	Let $X$ be a topological space. Let $\mathcal G=\{G_\lambda\subset X;\lambda\in\Lambda\}$ be a family of nonempty open sets. Put $\mathcal U=\{\cap_{\lambda\in I} G_\lambda^-;\textrm{finite}\ I\subset\Lambda\}$.
The following are equivalent:
	\begin{enumerate}
		\item	$\mathcal G$ is a $\pi-$base of $X$,
		\item $\mathcal U$ is a $\pi-$base of $(CL(X),V^-)$,
		\item $\mathcal U$ is a local $\pi-$base at $X$ in $(CL(X),V^-)$,
		\item $\cap\mathcal U=\{X\}$; i.e. $\mathcal U$ is a local pseudo base at $X$ in $(CL(X),V^-)$.
	\end{enumerate}
\end{lem}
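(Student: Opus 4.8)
The plan is to prove the cycle of implications $(1)\Rightarrow(2)\Rightarrow(3)\Rightarrow(4)\Rightarrow(1)$, exploiting the special structure of the hyperspace element $X\in CL(X)$ as the top element and the fact that lower Vietoris basic open sets $G^-$ are upward closed in a useful sense. Let me first record the key observation that will drive everything: for a nonempty open $G\subset X$ and $A\in CL(X)$, we have $A\in G^-$ iff $A\cap G\neq\emptyset$, and crucially the whole space $X$ satisfies $X\in G^-$ for every nonempty $G$, so $X$ belongs to every member of $\mathcal U$; thus $\cap\mathcal U\ni X$ automatically and the content of $(4)$ is the reverse inclusion $\cap\mathcal U\subset\{X\}$.

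For $(1)\Rightarrow(2)$: I would take an arbitrary nonempty basic open set in $(CL(X),V^-)$, which has the form $\cap_{j}U_j^-$ for finitely many nonempty open $U_j\subset X$, and I must find a member of $\mathcal U$ contained in it. Since $\mathcal G$ is a $\pi$-base, for each $j$ I can pick $G_{\lambda_j}\in\mathcal G$ with $G_{\lambda_j}\subset U_j$; then $G_{\lambda_j}^-\subset U_j^-$, so $\cap_j G_{\lambda_j}^-\in\mathcal U$ sits inside $\cap_j U_j^-$. One must only check that the chosen intersection is nonempty as a subset of $CL(X)$, which is immediate since a one-point set from each $G_{\lambda_j}$ (or simply $X$) lies in it. For $(2)\Rightarrow(3)$: a $\pi$-base is in particular a local $\pi$-base at any point, and here every member of $\mathcal U$ contains $X$, so $\mathcal U$ serves directly as a local $\pi$-base at $X$; this step should be essentially definitional.

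For $(3)\Rightarrow(4)$: I would argue that a local $\pi$-base at a point whose members all contain that point forces the point to be the only common element, \emph{provided} the space is $T_1$ enough to separate $X$ from other closed sets by lower-Vietoris sets; more carefully, if some $A\neq X$ lay in $\cap\mathcal U$, pick a point $p\in X\setminus A$ and the open set $(X\setminus A)$—but since $A$ is closed, $\{p\}$ avoids $A$—I want a lower-Vietoris neighborhood structure detecting this. The cleaner route is $(4)\Rightarrow(1)$ together with closing the loop, so I would instead prove $(3)\Rightarrow(4)$ by showing the reverse inclusion fails only if $\mathcal U$ is not even a local $\pi$-base. The genuinely substantive implication, and the one I expect to be the main obstacle, is $(4)\Rightarrow(1)$: assuming $\cap\mathcal U=\{X\}$ I must recover that $\mathcal G$ is a $\pi$-base of $X$. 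Here I would take an arbitrary nonempty open $U\subset X$, fix a point $x\in U$, and consider the closed set $A=X\setminus U$ (assuming $U\neq X$; the case $U=X$ is trivial). Since $A\neq X$, by hypothesis $A\notin\cap\mathcal U$, so some member $\cap_{\lambda\in I}G_\lambda^-$ of $\mathcal U$ fails to contain $A$, meaning $A\cap G_\lambda=\emptyset$ for some $\lambda\in I$, i.e. $G_\lambda\subset U$. This exhibits a member of $\mathcal G$ inside $U$, which is exactly the $\pi$-base condition. The delicate point to handle carefully is the edge case where $X\setminus U$ is empty, and ensuring the family $\mathcal G$ consists of nonempty sets so that the produced $G_\lambda$ genuinely witnesses the $\pi$-base property; I would dispatch these routine boundary cases and present $(4)\Rightarrow(1)$ as the heart of the argument.
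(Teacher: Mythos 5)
Your implications $(1)\Rightarrow(2)$, $(2)\Rightarrow(3)$ and $(4)\Rightarrow(1)$ are correct and essentially identical to the paper's; the paper runs $(4)\Rightarrow(1)$ contrapositively (if $\mathcal G$ is not a $\pi$-base, there is a closed $A\neq X$ meeting every $G_\lambda$, whence $A\in\cap\mathcal U$), but that is the same argument as your direct version with $A=X\setminus U$. The genuine gap is $(3)\Rightarrow(4)$: you never actually prove it. You begin the right construction (take closed $A\neq X$, look at a point $p\in X\setminus A$ and the open set $X\setminus A$), then hedge that you might need the space to be ``$T_1$ enough,'' and finally declare you would prove the implication ``by showing the reverse inclusion fails only if $\mathcal U$ is not even a local $\pi$-base'' --- which is merely a restatement of the contrapositive of $(3)\Rightarrow(4)$, not an argument for it. As written, your cycle of implications is broken at this link.

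The missing step is one line and requires no separation axiom at all. Let $A\in CL(X)$ with $A\neq X$. Then $X\setminus A$ is a nonempty open set, so $(X\setminus A)^-$ is $V^-$-open; it contains $X$, since $X\cap(X\setminus A)=X\setminus A\neq\emptyset$, and it does not contain $A$, since $A\cap(X\setminus A)=\emptyset$. By $(3)$ there is $\mathfrak U\in\mathcal U$ with $\mathfrak U\subset(X\setminus A)^-$, hence $A\notin\mathfrak U$ and so $A\notin\cap\mathcal U$; as $X\in\cap\mathcal U$ trivially, $\cap\mathcal U=\{X\}$. This is exactly the paper's argument, and it shows your $T_1$ worry was unfounded: the lower Vietoris set $(X\setminus A)^-$ already ``detects'' $A\neq X$ in any topological space. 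A further remark: your labeling of $(4)\Rightarrow(1)$ as the heart of the proof is inverted --- in your own write-up that implication is the routine one, while $(3)\Rightarrow(4)$ is precisely the step you left unproved.
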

\begin{proof}
	$(1)\Rightarrow(2):$ It follows from the fact that if $G_\lambda\subset U$, then $G_\lambda^-\subset U^-$.
	
	$(2)\Rightarrow(3):$ It follows from the fact that every open set in $V^-$ contains $X$.
	
	$(3)\Rightarrow(4):$ Clearly $X\in\cap\mathcal U$. Consider closed $A\not=X$. So $(X\setminus A)^-\supset\mathfrak U$ for some $\mathfrak U\in\mathcal U$. Then $A\not\in\mathfrak U$ and so $A\not\in\cap\mathcal U$.
	
	$(4)\Rightarrow(1):$ Suppose that $\mathcal G$ is not a $\pi-$base, then there is a closed set $A\subset X$ which meets every $G_\lambda\in\mathcal G$. Then clearly $A\in\cap\mathcal U$.
\end{proof}
W.l.o.g. we can consider for a pseudo (resp. $\pi-$) base in $(CL(X),V^-)$ only systems of the form $\mathcal U$ from the previous lemma. We have the following theorem as a direct corollary.
\begin{thm}
	Let  $X$ be a topological space. Then $\psi(X,CL(X),V^-)=\pi(X)=\pi\chi(X,CL(X),V^-)=\pi\chi(CL(X),V^-)$.
\end{thm}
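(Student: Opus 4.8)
The plan is to deduce the theorem from the previous lemma essentially as a corollary, splitting the four quantities into two groups. Three of them are attached to the single distinguished point $X\in CL(X)$, namely $\psi(X,CL(X),V^-)$, $\pi\chi(X,CL(X),V^-)$ and $\pi(X)$, and I expect their equality to fall out of a short cycle of inequalities built directly on the equivalences $(1)$--$(4)$; only the global $\pi$-character $\pi\chi(CL(X),V^-)$ needs a separate, though easy, argument. Throughout I would use the remark following the lemma: any local pseudo base at $X$ or any local $\pi$-base at $X$ may first be shrunk so that its members are basic open sets (finite intersections of subbasic sets $U^-$), and then, collecting all the open sets $U$ that occur into a family $\mathcal G$ of nonempty open subsets of $X$, the associated family $\mathcal U$ contains the given one and has the same (infinite) cardinality; hence every such extremal family may be taken of the form $\mathcal U$.

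For the cycle I would argue as follows. To get $\pi(X)\le\psi(X,CL(X),V^-)$, start from a local pseudo base at $X$ of minimal size, reduce it to a family $\mathcal U$ with $\cap\mathcal U=\{X\}$, and apply $(4)\Rightarrow(1)$ to conclude that the underlying $\mathcal G$ is a $\pi$-base of $X$. To get $\psi(X,CL(X),V^-)\le\pi\chi(X,CL(X),V^-)$, start from a minimal local $\pi$-base at $X$, reduce it to a family $\mathcal U$ (which, being a superfamily of the original, is still a local $\pi$-base), and apply $(3)\Rightarrow(4)$ to obtain $\cap\mathcal U=\{X\}$, so that $\mathcal U$ is a local pseudo base. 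Finally, to get $\pi\chi(X,CL(X),V^-)\le\pi(X)$, take a minimal $\pi$-base $\mathcal G$ of $X$, form $\mathcal U$, and apply $(1)\Rightarrow(3)$. Closing the cycle yields equality of these three invariants.

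It remains to show $\pi\chi(CL(X),V^-)=\pi(X)$. The inequality $\pi\chi(CL(X),V^-)\ge\pi\chi(X,CL(X),V^-)=\pi(X)$ is trivial, since the supremum defining the global $\pi$-character ranges over all points and in particular over $X$. For the reverse inequality I would prove the stronger statement that a single family $\mathcal U$, coming from a $\pi$-base $\mathcal G$ of $X$ of size $\pi(X)$, is simultaneously a local $\pi$-base at \emph{every} $A\in CL(X)$. The mechanism is the monotonicity $G\subseteq V\Rightarrow G^-\subseteq V^-$: given a basic neighborhood $\bigcap_{i\le n}V_i^-$ of $A$, each $V_i$ is nonempty and open, so by the $\pi$-base property there is $G_{\lambda_i}\in\mathcal G$ with $G_{\lambda_i}\subseteq V_i$, whence $\bigcap_{i\le n}G_{\lambda_i}^-\in\mathcal U$ is a nonempty member contained in $\bigcap_{i\le n}V_i^-$. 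Thus $\pi\chi(A,CL(X),V^-)\le|\mathcal U|\le\pi(X)$ for every $A$, giving $\pi\chi(CL(X),V^-)\le\pi(X)$.

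The bulk of the work is carried by the lemma, so the three point-invariant inequalities are almost immediate; the only genuinely new point is the last step, where one must check that a \emph{fixed} family $\mathcal U$ works as a local $\pi$-base at all closed sets at once, and I expect this — resting on $G\subseteq V\Rightarrow G^-\subseteq V^-$ together with the $\pi$-base property applied inside each $V_i$ — to be the main, though modest, obstacle. The cardinal bookkeeping (that $|\mathcal U|=|\mathcal G|$ for infinite $\mathcal G$, and that passing to basic sets changes cardinalities by at most $\aleph_0$) is routine and uses only that all the invariants in question are at least $\aleph_0$.
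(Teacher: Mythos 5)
Your proof is correct and takes essentially the same route as the paper: the paper declares the theorem ``a direct corollary'' of the lemma together with the w.l.o.g.\ remark, and your cycle of inequalities is precisely the cardinal bookkeeping that makes this explicit. The one step you flag as genuinely new --- that the fixed family $\mathcal U$ built from a $\pi$-base of $X$ is a local $\pi$-base at \emph{every} $A\in CL(X)$ --- is in fact already item $(2)$ of the lemma (a family of nonempty open sets is a $\pi$-base of $(CL(X),V^-)$ if and only if it is a local $\pi$-base at each point, and your argument via $G\subseteq V\Rightarrow G^-\subseteq V^-$ is the paper's proof of $(1)\Rightarrow(2)$), so nothing there lies outside the lemma either.
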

Note that every open neighborhood of $X$ in $W_\rho$ is a member of $V^-$. We have the following corollary.
\begin{cor}
	$d(X)\leq \min\{\psi(CL(X)),\pi\chi(CL(X))\}$.
\end{cor}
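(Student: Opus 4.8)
The plan is to bound the two quantities on the right separately, in each case by comparing the behaviour of the distinguished point $X\in CL(X)$ in the two topologies $V^-$ and $W_\rho$. From the preceding theorem we have $\psi(X,CL(X),V^-)=\pi\chi(X,CL(X),V^-)=\pi(X)$, and by Note \ref{notemetric} the metric space $X$ satisfies $\pi(X)=d(X)$; thus both point-values at $X$ in $V^-$ equal $d(X)$. Since for any space $\psi(CL(X))\ge\psi(X,CL(X))$ and $\pi\chi(CL(X))\ge\pi\chi(X,CL(X))$, it suffices to prove $\psi(X,CL(X),W_\rho)\ge d(X)$ and $\pi\chi(X,CL(X),W_\rho)\ge d(X)$. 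The engine throughout is the displayed remark that every $W_\rho$-open neighbourhood of $X$ belongs to $V^-$: since also $V^-\subseteq W_\rho$, the families of open sets containing the point $X$ are literally the same in $W_\rho$ and in $V^-$.

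For the pseudocharacter this settles matters at once. A family witnessing $\psi$ at $X$ consists of open sets all containing $X$ and having intersection $\{X\}$; by the previous sentence such families are the same for $W_\rho$ and $V^-$, so $\psi(X,CL(X),W_\rho)=\psi(X,CL(X),V^-)=d(X)$, giving $d(X)\le\psi(CL(X))$.

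The $\pi$-character is the substantive case, and here the coincidence of neighbourhoods of $X$ is not by itself enough, because $W_\rho$ is strictly finer than $V^-$ and a local $\pi$-base at $X$ may use $W_\rho$-open sets that do not contain $X$ (for instance pieces of $S^+(y,\beta)$); refinement generally tends to \emph{lower} $\pi$-character, so I must argue in the direction that the upper part cannot help. I would start from a local $\pi$-base $\mathcal B$ at $X$ in $W_\rho$ of size $\pi\chi(X,CL(X),W_\rho)$ and, for each $B\in\mathcal B$, fix a nonempty basic set $\beta_B=\bigcap_{i\in I_B}S^-(x_i,\alpha_i)\cap\bigcap_{j\in J_B}S^+(y_j,\beta_j)\subseteq B$ with $I_B,J_B$ finite. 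The candidate $\pi$-base of $X$ is the family $\mathcal G'$ of the open sets $S(x_i,\alpha_i)\setminus\bigcup_{j\in J_B}B(y_j,\beta_j)$ for $B\in\mathcal B$ and $i\in I_B$; it has cardinality at most $|\mathcal B|$. Each such set is nonempty because any $A\in\beta_B$ meets $S(x_i,\alpha_i)$ in a point that, by $A\in S^+(y_j,\beta_j)$, avoids every closed ball $B(y_j,\beta_j)$.

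It remains to check that $\mathcal G'$ is a $\pi$-base of $X$, which is the step I expect to be the crux. Given a nonempty open $O\subseteq X$, the set $O^-$ contains $X$ and lies in $V^-\subseteq W_\rho$, hence is a $W_\rho$-neighbourhood of $X$; so some $B\in\mathcal B$ satisfies $B\subseteq O^-$ and therefore $\beta_B\subseteq O^-$. If no member of $\mathcal G'$ coming from this $B$ were contained in $O$, then for each $i\in I_B$ I could choose $a_i\in S(x_i,\alpha_i)$ with $a_i\notin O$ and $a_i\notin B(y_j,\beta_j)$ for all $j$; the finite (hence closed) set $A=\{a_i:i\in I_B\}$ then lies in $\beta_B$ yet misses $O$, contradicting $\beta_B\subseteq O^-$. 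Hence some $S(x_i,\alpha_i)\setminus\bigcup_j B(y_j,\beta_j)\subseteq O$, so $\mathcal G'$ is a $\pi$-base and $d(X)=\pi(X)\le|\mathcal G'|\le\pi\chi(X,CL(X),W_\rho)\le\pi\chi(CL(X))$. The only delicate point is the absorption of the upper-Vietoris constraints into the balls via the subtraction of the closed balls $B(y_j,\beta_j)$, which is exactly what makes the finite set $A$ simultaneously lie in $\beta_B$ and avoid $O$; the distinguished role of $X$, whose neighbourhoods are only lower-Vietoris sets, is what lets me locate $B$ inside $O^-$ in the first place.
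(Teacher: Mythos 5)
Your overall strategy is the paper's: the paper gets this corollary in one line from the preceding theorem ($\psi(X,CL(X),V^-)=\pi(X)=\pi\chi(X,CL(X),V^-)$), the identity $\pi(X)=d(X)$ for metric spaces, and the remark that $W_\rho$-neighbourhoods of the point $X$ are $V^-$-neighbourhoods. Where you genuinely add something is in noticing that this remark by itself settles only the pseudocharacter half: members of a local $\pi$-base at $X$ in $W_\rho$ need not contain $X$, so they may involve upper-type sets and need not be $V^-$-open, and the transfer of $\pi$-character therefore requires an argument. Your construction --- shrinking to basic sets, passing to the open sets $S(x_i,\alpha_i)\setminus\bigcup_{j\in J_B}B(y_j,\beta_j)$, and using a finite set of points to contradict $\beta_B\subseteq O^-$ --- is exactly the content the paper leaves implicit, and it is the same finite-set device the paper uses in its lemma on $\pi$-bases of $(CL(X),V^-)$ and in its tightness theorem. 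So your write-up is a legitimate and in fact more complete proof along the paper's lines.

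Two points need repair. First, your ``engine'' is overstated: it is false that the $W_\rho$-open sets containing $X$ are literally the $V^-$-open sets containing $X$. For instance, in $X=\mathbf{R}$ the set $S^-(0,1)\cup S^+(0,1)$ is $W_\rho$-open and contains $X$, but it is not $V^-$-open: it contains $\{2\}$, yet every basic $V^-$-neighbourhood $\bigcap_k G_k^-$ of $\{2\}$ also contains the closed set $\mathbf{R}\setminus(-1,1)$, whose distance from $0$ is exactly $1$, so that set lies outside $S^-(0,1)\cup S^+(0,1)$. What is true --- and all you need for the pseudocharacter half --- is that the neighbourhood \emph{filters} of the point $X$ coincide: since $\rho(y,X)=0$ for every $y\in X$, no $S^+(y,\beta)$ can occur in a basic neighbourhood of $X$, so every $W_\rho$-neighbourhood of $X$ contains a $V^-$-open set containing $X$; then shrink each member of a $W_\rho$-pseudo-base at $X$ accordingly, and the intersection is still $\{X\}$. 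Second, in the $\pi$-character argument you must exclude the degenerate case $I_B=\emptyset$: a nonempty basic set of the form $\bigcap_j S^+(y_j,\beta_j)$ with no lower part can perfectly well occur (e.g.\ $S^+(y,1)$ in an unbounded space), and for such a $B$ your family $\mathcal G'$ receives no sets while your finite set $A$ would be empty, hence not a member of $CL(X)$, so the verification breaks for precisely that $B$. The fix is one line: fixing some $A_0\in\beta_B$ and $a\in A_0$, replace $\beta_B$ by $\beta_B\cap S^-(a,1)$, so that w.l.o.g.\ $I_B\neq\emptyset$; alternatively, put the nonempty open set $X\setminus\bigcup_{j\in J_B}B(y_j,\beta_j)$ into $\mathcal G'$ as well, noting that every singleton taken from it belongs to $\beta_B$, so $\beta_B\subseteq O^-$ forces this set to be contained in $O$. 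With these two patches your proof is complete.
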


For $A\subset X$ denote by $\mathcal{F}(A)$ the set of all finite subsets of $A$. The following lemma is well-known; see e.g. \cite{mizokami}.
\begin{lem}\label{dense}
	If $E$ is dense in $X$, then $\mathcal{F}(E)$ is dense in $(CL(X),V)$.
\end{lem}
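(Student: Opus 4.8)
The plan is to check density directly against a base for the Vietoris topology $V$. Recall $V$ is generated by the subbase $\{U^-\}\cup\{U^+\}$ ranging over open $U\subset X$, so a typical nonempty basic open set is a finite intersection
\[
\mathcal V = U_1^-\cap\dots\cap U_k^-\cap W^+,
\]
with $U_1,\dots,U_k,W$ open. First I would normalize this set. Since $\bigcap_i W_j^+=(\bigcap_j W_j)^+$, the upper factors collapse to a single $W^+$, and $W^+\cap\bigcap_i U_i^-$ consists exactly of those $A\in CL(X)$ with $A\subset W$ meeting every $U_i$. Replacing each $U_i$ by $V_i:=U_i\cap W$ leaves $\mathcal V$ unchanged, giving the clean description $\mathcal V=\{A\in CL(X):A\subset W,\ A\cap V_i\ne\emptyset\ (1\le i\le k)\}$. (If no lower factor is present, take $k=1$ and $V_1=W$, and if no upper factor is present take $W=X$.)

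Next I would extract the nonemptiness information. Assuming $\mathcal V\ne\emptyset$, pick any $A\in\mathcal V$; the condition $A\cap V_i\ne\emptyset$ then forces each open set $V_i$ to be nonempty. Now density of $E$ applies: choose $e_i\in E\cap V_i$ for each $i$ and set $F=\{e_1,\dots,e_k\}$. Because finite subsets of a $T_1$ space (in particular a metric space) are closed, $F\in CL(X)$ and hence $F\in\mathcal F(E)$. Finally $F\subset V_1\cup\dots\cup V_k\subset W$ and $e_i\in F\cap V_i$ for each $i$, so $F\in\mathcal V$. Thus $\mathcal F(E)$ meets every nonempty basic open set of $(CL(X),V)$, which is exactly the statement that $\mathcal F(E)$ is dense.

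I do not expect a genuine obstacle in this argument; it is essentially a one-point-per-coordinate density chase. The only step requiring any care is the initial reduction of an arbitrary finite intersection of subbasic members to the normalized ``finite set enclosed in $W$ and meeting each $V_i$'' form, together with the small but crucial observation that the \emph{nonemptiness} of $\mathcal V$ is precisely what guarantees each $V_i\ne\emptyset$, so that density of $E$ can legitimately supply the points $e_i$.
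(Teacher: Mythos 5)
Your proof is correct. Note, however, that the paper does not actually prove this lemma: it is dispatched with the remark that it is well-known, with a citation to \cite{mizokami}. So there is no ``paper proof'' to compare against; your argument is a complete, self-contained substitute for that citation, and it is the standard one. All the steps check out: finite intersections of upper sets collapse because $U^+\cap V^+=(U\cap V)^+$; the normalization $V_i=U_i\cap W$ does not change the basic open set $\mathcal V$, since any $A\subset W$ meets $U_i$ exactly when it meets $U_i\cap W$; nonemptiness of $\mathcal V$ is indeed what guarantees each $V_i\ne\emptyset$, which is where density of $E$ enters; and the resulting finite set $F=\{e_1,\dots,e_k\}$ lies in $CL(X)$ because the ambient space is metric (hence $T_1$), a point you correctly flag --- it is the only hypothesis beyond density that the argument needs, and it is the reason the lemma as stated implicitly assumes at least a $T_1$ space. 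Your handling of the degenerate cases (no lower or no upper factors) is also right, so the proof is complete as written.
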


\begin{thm}
	$d(X)\leq t(X,CL(X),V^-)\leq t(CL(X),W_\rho)$.
\end{thm}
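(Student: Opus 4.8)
The plan is to establish the two inequalities independently, routing both through the point $X\in CL(X)$ regarded as a closed subset of itself.

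For the right inequality $t(X,CL(X),V^-)\le t(CL(X),W_\rho)$, I would first argue that the neighborhood filter of the point $X$ is identical in $V^-$ and in $W_\rho$. The key is that $\rho(x,X)=0$ for every $x\in X$, so $X\notin S^+(x,\alpha)$ for any $\alpha\in Q^+$; hence no upper subbasic set contains $X$, and every basic $W_\rho$-neighborhood of $X$ is a finite intersection of sets $S^-(x,\alpha)=S(x,\alpha)^-$, i.e. a $W_\rho^-=V^-$ neighborhood. Since $W_\rho$ is finer than $V^-$, the two neighborhood filters at $X$ coincide (this is precisely the remark preceding the statement). Because both the relation $X\in\overline{\mathcal A}$ and the value of the tightness at $X$ depend only on this filter, we obtain $t(X,CL(X),V^-)=t(X,CL(X),W_\rho)$, and the latter is $\le t(CL(X),W_\rho)$ since the tightness at a point never exceeds the tightness of the whole space.

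For the left inequality $d(X)\le t(X,CL(X),V^-)$, write $\kappa=t(X,CL(X),V^-)$ and consider the family $\mathcal F(X)$ of all finite subsets of $X$. I would first note $X\in\overline{\mathcal F(X)}^{V^-}$: any basic $V^-$-neighborhood of $X$ has the form $\bigcap_{i\le n}U_i^-$ with each $U_i$ nonempty open, and a finite set containing one point from each $U_i$ lies in it. By the definition of tightness there is $\mathcal B\subseteq\mathcal F(X)$ with $|\mathcal B|\le\kappa$ and $X\in\overline{\mathcal B}^{V^-}$. Setting $D=\bigcup\mathcal B$, each member of $\mathcal B$ is finite and $\kappa\ge\aleph_0$, so $|D|\le\kappa$. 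Finally $D$ is dense: for nonempty open $U\subseteq X$ the set $U^-$ is a $V^-$-neighborhood of $X$, hence meets $\mathcal B$, so some $F\in\mathcal B$ meets $U$ and therefore $D\cap U\ne\emptyset$. Thus $d(X)\le|D|\le\kappa$.

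The only genuinely nontrivial point is the first one — recognizing that the upper part of the Wijsman topology is invisible at the point $X$, so that tightness there agrees with tightness in $V^-$; once that is in hand, the density bound is a routine finite-subset argument and presents no real obstacle.
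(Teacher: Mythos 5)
Your proof is correct and follows essentially the same route as the paper: the right inequality comes from noting that the $V^-$ and $W_\rho$ neighborhood filters at the point $X$ coincide (the paper cites this as ``every $W_\rho$ neighborhood of $X$ belongs to $V^-$'', which you justify in more detail via $\rho(x,X)=0$), and the left inequality uses exactly the paper's finite-subset argument: extract a small $\mathcal{B}\subseteq\mathcal{F}(X)$ with $X\in\overline{\mathcal{B}}$ and show $\bigcup\mathcal{B}$ is dense in $X$.
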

\begin{proof}
	The second inequality follows from the fact that every $W_\rho$ neighborhood of $X$ belongs to $V^-$. The first inequality is simmilar to \cite[2.4.]{cosholvit}. Since $X\in\overline{\mathcal{F}(X)}$ there is $\mathcal{A}\subset\mathcal{F}(X)$ such that $|\mathcal{A}|\leq t(X,CL(X),V^-)$ and $X\in\overline{\mathcal{A}}$. For every open $U\subset X$, $U^-$ is a neighborhood of $X$ and therefore contains some $F\in\mathcal{A}$, i.e. $U$ meets $F$ and then obviously meets $\cup\mathcal{A}$, which is therefore dense in $X$.
\end{proof}

Since $X$ is a closed subset of $CL(X)$ we have: $e(CL(X))\geq e(X)=d(X)$. And finally from previous results, trivial inequalities from Note \ref{notecard} and the fact that $hw=w$ we have the following theorem.
\begin{thm}
	$d(X)=f(CL(X))=hf(CL(X))$, where $f$ is any function from $\psi,\psi w,\pi\chi,\pi,nw,t,L,w,\Delta,u,e,s,ww$.
\end{thm}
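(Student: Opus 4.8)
The plan is to sandwich each $f(CL(X))$ between $d(X)$ and $d(X)$ using the inequalities recorded in Note \ref{notecard} together with the four lower bounds and the one upper bound already established. First I would record that $(CL(X),W_\rho)$ is Tychonoff: $W_\rho$ is the initial topology with respect to the maps $\rho(x,\cdot):CL(X)\to[0,\infty)$, hence completely regular, and these maps separate points (if $A\neq B$, pick $x\in A\setminus B$, so $\rho(x,A)=0<\rho(x,B)$), so the space is also $T_1$. Thus the full diagram of Note \ref{notecard} applies, not merely its $T_1$ fragment. For the upper bounds, the Proposition supplies the single master inequality $w(CL(X))\le d(X)$. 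Since every $f$ in the list lies below $w$ in the diagram (including $\Delta\le w$, $u\le w$ and $ww\le w$), it follows at once that $f(CL(X))\le w(CL(X))\le d(X)$ for each of the thirteen functions.

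For the lower bounds I would start from the four estimates already proved, namely $d(X)\le\psi(CL(X))$ and $d(X)\le\pi\chi(CL(X))$ from the Corollary, $d(X)\le t(CL(X))$ from the preceding Theorem, and $d(X)\le e(CL(X))$ from $X$ being closed in $CL(X)$ together with $e(X)=d(X)$. These I would propagate upward along the edges of the diagram: $\psi\le\psi w$, $\psi\le\Delta$, $\psi\le nw$ and $\psi\le u$ (the last through $\psi\le\chi\le u$); $\pi\chi\le\pi$; $e\le s$, $e\le L$ and $e\le w$; and $nw\le ww$. Each chain terminates in one of the listed functions, so $d(X)\le f(CL(X))$ holds throughout. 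Combining the two directions yields $f(CL(X))=d(X)$ for every $f$; in particular $w(CL(X))=d(X)$.

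For the hereditary versions I would use $hf\ge f$ for the lower bound and monotonicity of the diagram under the hereditary operation for the upper bound: the inequality $f\le w$ valid for all Tychonoff spaces gives $hf\le hw$, and since $hw=w$ we obtain $hf(CL(X))\le hw(CL(X))=w(CL(X))=d(X)\le f(CL(X))\le hf(CL(X))$, whence $hf(CL(X))=d(X)$. This single argument covers all thirteen functions uniformly, $u$ included, because $u\le w$ is among the edges of the diagram.

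The routine but genuinely load-bearing step is verifying the exact orientation of the less familiar edges on the non-metrizable space $CL(X)$, principally $\psi\le\Delta$, $\chi\le u\le w$, and $nw\le ww$, and confirming that Note \ref{notecard} licenses them for Tychonoff spaces; everything else is bookkeeping. As a safeguard for $u$, one can bypass the $\chi\le u\le w$ route entirely and read its upper bound off the explicit uniform subbase $\{W_{x,n};x\in E,\ n\in\omega\}$ with $E$ a dense subset of size $d(X)$, which manifestly generates a base of cardinality $d(X)\cdot\aleph_0=d(X)$, giving $u(CL(X))\le d(X)$ directly.
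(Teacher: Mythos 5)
Your strategy is the same as the paper's: combine the four lower bounds already established ($d(X)\le\psi(CL(X))$, $d(X)\le\pi\chi(CL(X))$, $d(X)\le t(CL(X))$, $d(X)\le e(CL(X))$) with the upper bound $w(CL(X))\le d(X)$, propagate along the Tychonoff chart of Note \ref{notecard}, and use $hw=w$ for the hereditary versions; your explicit verification that $(CL(X),W_\rho)$ is Tychonoff and your direct estimate $u(CL(X))\le d(X)$ from the uniform subbase are sound additions. However, there is one genuine error, and it sits exactly on the edge you yourself flagged as load-bearing: the inequality $nw\le ww$ is false --- its orientation is the reverse of what you claim. From the definition, $ww(X)\le w(X)$ always (the identity is a continuous bijection), and in fact $ww\le nw$ for Tychonoff spaces (a theorem of Arhangel'skii: a Tychonoff space with a network of cardinality $\kappa$ admits a continuous bijection onto a Tychonoff space of weight at most $\kappa$). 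The inequality can be strict: a discrete space of cardinality $2^{\aleph_0}$ has $ww=\aleph_0$ but $nw=2^{\aleph_0}$. Indeed, the paper's own Lemma \ref{lowbound}, $\log(d(X))\le ww(X)\le d(CL(X))$, together with Example \ref{hodelex}, exploits precisely the fact that $ww$ can fall far below $nw=d$ on metric spaces. Consequently your chain $d(X)\le\psi(CL(X))\le nw(CL(X))\le ww(CL(X))$ collapses at the last step, and the lower bound for $ww(CL(X))$ --- one of the thirteen assertions of the theorem --- is left unproved.

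The gap is repairable with tools you already have, since two edges you correctly oriented terminate \emph{below} $ww$ rather than above it. First, $\psi w\le ww$: if $g:CL(X)\to Y$ is a continuous bijection onto a Tychonoff space $Y$ with $w(Y)=ww(CL(X))$, then the preimages of a base of $Y$ form a pseudobase of $CL(X)$, because $g$ is injective and $Y$ is $T_1$. Second, $\Delta\le ww$: the preimages under $g\times g$ of open sets witnessing $\Delta(Y)\le w(Y)$ are open and, again by injectivity, intersect exactly in the diagonal of $CL(X)\times CL(X)$. Either chain, $d(X)\le\psi(CL(X))\le\psi w(CL(X))\le ww(CL(X))$ or $d(X)\le\psi(CL(X))\le\Delta(CL(X))\le ww(CL(X))$, supplies the missing bound; with that single correction your argument agrees with the paper's proof in full.
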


Those are all functions from Figure \ref{figcard}, lines 3--6.


\section{Density and Celularity}

The following proposition is a direct corollary of Lemma \ref{dense}.
\begin{prop}\label{density}
	$d(CL(X))\leq d(X)$.
\end{prop}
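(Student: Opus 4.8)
The plan is to exhibit a dense subset of $(CL(X),W_\rho)$ whose cardinality does not exceed $d(X)$. First I would fix a dense set $E\subseteq X$ realizing the density of $X$, so that $|E|\le d(X)$, and apply Lemma \ref{dense} to conclude that the family $\mathcal F(E)$ of all finite subsets of $E$ is dense in $(CL(X),V)$, i.e.\ in the hyperspace carrying the full Vietoris topology.

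The key observation is then that the Wijsman topology $W_\rho$ is coarser than the Vietoris topology $V$ on $CL(X)$: every subbasic $W_\rho$-open set is already $V$-open, since $S^-(x,\alpha)=S(x,\alpha)^-\in V^-$ and $S^+(x,\alpha)=\bigcup_{\beta>\alpha}B(x,\beta)^{C+}$ is a union of members of $V^+$. Density is monotone under passing to a coarser topology, because every nonempty $W_\rho$-open set is $V$-open and hence meets any $V$-dense family. Consequently $\mathcal F(E)$ remains dense in $(CL(X),W_\rho)$.

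It then remains only to count $\mathcal F(E)$. Since all cardinal invariants in play are at least $\aleph_0$, the set $E$ is infinite, and therefore $|\mathcal F(E)|=|E|\le d(X)$. Combining this cardinality bound with the density of $\mathcal F(E)$ in $(CL(X),W_\rho)$ yields $d(CL(X))\le d(X)$, as required.

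I do not expect any genuine obstacle here; the statement is, as advertised, a direct corollary of Lemma \ref{dense}. The only points deserving a word of care are the comparison $W_\rho\subseteq V$ (already recorded in the preliminaries, where it is noted that $W_\rho^-=V^-$ and that every $W_\rho$-neighborhood of $X$ lies in $V^-$) and the elementary cardinal arithmetic $|\mathcal F(E)|=|E|$ valid for infinite $E$.
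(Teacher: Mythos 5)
Your proof is correct and is precisely the argument the paper intends: the paper states Proposition \ref{density} as a direct corollary of Lemma \ref{dense}, leaving implicit exactly the steps you spelled out (density of $\mathcal F(E)$ in the Vietoris topology, the comparison $W_\rho\subseteq V$, and the count $|\mathcal F(E)|=|E|\le d(X)$). No gaps; your write-up simply makes explicit what the paper leaves to the reader.
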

Then $d(X)\geq hd(CL(X))\geq hc(CL(X))=s(CL(X))=d(X)$ i.e.
\begin{cor}
	$hd(CL(X))=d(X)$.
\end{cor}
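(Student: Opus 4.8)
The plan is to prove the equality by a squeeze, bounding $hd(CL(X))$ above and below by quantities that the preceding theorem (the one evaluating every listed cardinal function of $CL(X)$ as $d(X)$) already identifies with $d(X)$. This is exactly the chain $d(X)\geq hd(CL(X))\geq hc(CL(X))=s(CL(X))=d(X)$ flagged just above the statement; the work lies in justifying each link.

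First I would establish the upper bound $hd(CL(X))\leq d(X)$. Since density is not hereditary, I cannot invoke Proposition \ref{density} directly for subspaces; instead I pass through the netweight, which \emph{is} hereditary. Indeed, if $\mathcal N$ is a network for $CL(X)$ and $Y\subseteq CL(X)$, then $\{N\cap Y; N\in\mathcal N\}$ is a network for $Y$, so $nw(Y)\leq nw(CL(X))$ and hence $hnw(CL(X))=nw(CL(X))$. Combining the trivial inequality $d\leq nw$ from Note \ref{notecard}, applied to every subspace, with this identity gives $hd(CL(X))\leq hnw(CL(X))=nw(CL(X))$, and the preceding theorem evaluates $nw(CL(X))=d(X)$ since $nw$ appears in its list.

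For the lower bound I would use $hd(CL(X))\geq hc(CL(X))=s(CL(X))=d(X)$. The first inequality is the hereditary version of $c\leq d$ from Note \ref{notecard}: since $c(Y)\leq d(Y)$ for every subspace $Y$, taking suprema yields $hc\leq hd$. The identity $hc=s$ is the classical fact that hereditary cellularity equals spread: any discrete subspace $D$ is a cellular family of singletons, so $s\leq hc$, while conversely $c(Y)\leq s(Y)\leq s(CL(X))$ for every subspace $Y$ (spread being hereditary) yields $hc\leq s$. The last equality $s(CL(X))=d(X)$ is again read off from the preceding theorem, as $s$ is in its list.

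Putting the two bounds together gives $d(X)\leq hd(CL(X))\leq d(X)$, so $hd(CL(X))=d(X)$. I expect no genuine obstacle here: the only step needing a moment's care is the identity $hc=s$, but it is standard, and everything else is bookkeeping with the monotonicity relations of Note \ref{notecard} and the values supplied by the preceding theorem.
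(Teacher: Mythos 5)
Your proof is correct and is essentially identical to the paper's: the paper disposes of the corollary with exactly the chain $d(X)\geq hd(CL(X))\geq hc(CL(X))=s(CL(X))=d(X)$, where the upper bound comes from a hereditary invariant ($hw=w$, or your $hnw=nw$) evaluated as $d(X)$ by the preceding theorem, and $s(CL(X))=d(X)$ is read off the same theorem. Your write-up only adds the standard justifications ($hnw=nw$ and $hc=s$) that the paper leaves implicit.
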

	For a cardinal number  $\mathfrak n$ define $\log(\mathfrak n)=min\{\mathfrak m;\mathfrak n\leq 2^\mathfrak{m}\}$ \cite{mccoy}. Observe that $\log(2^\mathfrak{n})\leq\mathfrak n\leq2^{\log(\mathfrak n)}$.
\begin{lem}\label{lowbound}
	$\log(d(X))\leq ww(X)\leq d(CL(X))$.
\end{lem}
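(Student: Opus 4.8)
The plan is to prove the two inequalities separately, treating $ww(X)\le d(CL(X))$ and $\log(d(X))\le ww(X)$ independently.

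For the upper estimate $ww(X)\le d(CL(X))$, I would fix a dense set $D\subset CL(X)$ with $|D|=d(CL(X))$ and use the distance functionals indexed by $D$ to produce a coarse Tychonoff topology on $X$. Concretely, consider the map $\phi\colon X\to[0,\infty)^D$ defined by $\phi(x)=(\rho(x,A))_{A\in D}$. Each coordinate $x\mapsto\rho(x,A)$ is $1$-Lipschitz, hence continuous, so $\phi$ is continuous into the Tychonoff product $[0,\infty)^D$. Setting $Y=\phi(X)$ with the subspace topology, $Y$ is Tychonoff and $w(Y)\le w([0,\infty)^D)=|D|=d(CL(X))$, so it suffices to show that $\phi$ is injective; then $\phi\colon X\to Y$ is a continuous bijection witnessing $ww(X)\le w(Y)\le d(CL(X))$.

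The injectivity of $\phi$ is the main obstacle and the only place where the Wijsman structure really enters. Given distinct $x,y\in X$, put $r=\rho(x,y)>0$ and look at the singleton $\{x\}\in CL(X)$. It lies in the $W_\rho$-open set $S^-(x,r/3)\cap S^+(y,2r/3)$, since $\rho(x,\{x\})=0<r/3$ and $\rho(y,\{x\})=r>2r/3$. By density of $D$ there is $A\in D$ in this neighborhood, whence $\rho(x,A)<r/3<2r/3<\rho(y,A)$, so $\phi(x)\ne\phi(y)$. Thus distinct points are separated by some coordinate, which is exactly where the subbasic sets $S^-$ and $S^+$ do the work; I expect the bookkeeping with the radii $r/3$ and $2r/3$ to be the only delicate part.

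For the lower estimate $\log(d(X))\le ww(X)$, I would choose a Tychonoff space $Y$ together with a continuous bijection $f\colon X\to Y$ realizing $w(Y)=ww(X)$. Since $f$ is a bijection, $|X|=|Y|$, and since $Y$ is Tychonoff (in particular $T_0$) one has the standard estimate $|Y|\le 2^{w(Y)}$, because each point is determined by the subfamily of a fixed base of size $w(Y)$ containing it. Combining this with $d(X)\le card(X)=\aleph_0+|X|$ gives $d(X)\le 2^{ww(X)}$, that is, $\log(d(X))\le ww(X)$. This direction is a short cardinality count, so the substantial content of the lemma is the separation argument in the second paragraph.
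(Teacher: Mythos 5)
Your proposal is correct and takes essentially the same route as the paper: the paper's proof uses the very same map (there called $H\colon X\to \mathbf{R}^{\mathcal D}$, $(\pi_A\circ H)(x)=\rho(x,A)$ for $A$ in a dense $\mathcal D\subset CL(X)$), asserts it is a continuous injection to get $ww(X)\leq w(H(X))\leq|\mathcal D|\leq d(CL(X))$, and disposes of the first inequality by citing the diagram of cardinal invariants in Note 2.1. The only difference is that you make explicit the two steps the paper leaves to assertion or citation — injectivity via the subbasic neighborhood $S^-(x,r/3)\cap S^+(y,2r/3)$ of $\{x\}$, and the count $d(X)\leq\aleph_0+|X|\leq 2^{ww(X)}$ giving $\log(d(X))\leq ww(X)$ — and both verifications are sound.
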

\begin{proof}
	The first inequality follows from Note \ref{notecard} and for the second suppose $\mathcal D$ is dense in $CL(X)$ with the cardinality $d(CL(X))$. Define $H:X\to R^\mathcal{D}$ by $(\pi_A\circ H)(x)=d(x,A)$ for each $x\in X$ and $A\in\mathcal D$. $H$ is a continuous injection, thus $ww(X)\leq w(H(X))\leq |\mathcal D|\leq d(CL(X))$.
\end{proof}
	We will provide an example where the inequality from Corollary \ref{density} is sharp.
\begin{exa}
	There is $X$ with $d(CL(X))<d(X)$.
\end{exa}
\begin{proof}
	Let $(X,\mu)$ be a separable metric space with $|X|>\aleph_0$ and let $\rho$ be the $0-1$ metric on $X$. Let $\{x_i;i\in\omega\}$ be a dense set in $(X,\mu)$. Put $\mathcal H=\{B_\mu(x_i,1/j);i,j\in\omega\}$. Let $\mathcal L$ be a family of all finite unions of elements from $\mathcal H$. Then $\mathcal L$ is dense in $(CL(X),W_\rho)$.
\end{proof}
	The result about discrete metric spaces can be generalized.
\begin{exa}\label{hodelex}
	Let $X$ be a discrete metric space with the $0-1$ metric. Then $d(CL(X))=\log(d(X))$ and $c(CL(X))=\aleph_0$.
\end{exa}
\begin{proof}
	Let $2^X$ be a space of functions from $X$ to $\{0,1\}$ equipped with the topology of pointwise convergence. Let $\mathbf 1\in 2^X$ be a constant function with value $1$. $CL(X)$ is homeomorphic to $2^X\setminus\{\mathbf 1\}$ thus $d(CL(X))=d(2^X\setminus\{\mathbf 1\})=d(2^X)$ and $c(CL(X))=c(2^X\setminus\{\mathbf 1\})=c(2^X)$. From \cite[11.8.]{hodel} we have that $c(2^X)=\aleph_0$ and $d(2^X)=\log(|X|)$.
\end{proof}

	So we have an example of a space where density and cellularity reach their respective lower bounds. Now we will construct one where they will reach their upper bounds. We will use the following metric.
\begin{deff}
	Let $X$ be a nonempty set, $M\subset X$ and $|M|$ is either infinite or even. For every $x\in M$ define its reflection $x'\in M$ such that $x'\not=x$ and $(x')'=x$. Define a metric $\rho_M$ on $X$ by $\rho_M(x,x)=0$ for $x\in X$, $\rho_M(x,x')=2$ for $x\in M$ and $\rho_M(x,y)=1$ otherwise. Such $\rho_M$ will be called  $M-$metric. Put $\mathcal S_M=\{\{\{x\}\};x\in M\}$.
\end{deff}

	Note that if $X$ is a set, $M\subset X$, $\rho$ is  $0-1$ metric on $X$ and $\rho_M$ is  $M-$metric on $X$; then $W_\rho\cup\mathcal S_M$ is a base of $W_{\rho_M}$.
\begin{exa}
	Let $X$ be a set with $|X|=\kappa\geq\aleph_0$ and $M\subset X$ with $|M|=\mathfrak m\geq\aleph_0$. Equip $X$ with  $M-$metric $\rho_M$. Then $c(CL(X))=\mathfrak m$ and $d(CL(X))=\log\kappa+\mathfrak m$.
	
	So we can take $M$ such that $\mathfrak m=\kappa$ (e.g. $M=X$) to obtain $c(CL(X))=d(CL(X))=d(X)$ or we can take $\mathfrak m>\aleph_0$ and $\kappa=2^{2^\mathfrak m}$ to obtain $\aleph_0<c(CL(X))<d(CL(X))<d(X)$.
\end{exa}
\begin{proof}
	Let $\rho$ be  $0-1$ metric and take $W_\rho\cup\mathcal S_M$ as a base of $W_{\rho_M}$. Suppose that $\mathcal U$ is a cellular system consisting of basic open sets. $W_\rho\cap\mathcal U$ is cellular in $W_\rho$ and by Example \ref{hodelex}  $|W_\rho\cap\mathcal U|\leq\aleph_0$. Since $|\mathcal U\cap\mathcal S_M|\leq\mathfrak m$ we have that $c(CL(X))\leq\mathfrak m$. The reverse inequality is due to the fact that $\mathcal S_M$ is cellular. From Lemma \ref{lowbound} follows that $d(CL(X))\geq\log d(X)=\log\kappa$. Trivially $d(CL(X))\geq|\mathcal S_M|=\mathfrak m$ and so $d(CL(X))\geq\log\kappa+\mathfrak m$. For the reverse inequality take $\mathcal D$ a dense subset of $(CL(X),W_\rho)$ with $|\mathcal D|=\log\kappa$. The set $\mathcal D\cup(\bigcup\mathcal S_M)$ is dense in $(CL(X),W_{\rho_M})$.
\end{proof}



\section{Some Results about Normality of the Wijsman Topology}

In \cite[Problem I]{maio} the following question about normality of the Wijsman topology is posed: \emph{It is known that if $(X,\rho)$ is a separable metric space, then $(CL(X),W_\rho)$ is metrizable and so paracompact and normal. Is the opposite true? Is $(CL(X),W_\rho)$ normal if and only if $(CL(X),W_\rho)$ is metrizable?}

We have found several classes of metric spaces for which this is true. And we have also an answer for a weaker question. Suppose $X$ is metrizable. If $X$ is separable then for every compatible metric $\rho$, $(CL(X),W_\rho)$ is metrizable and thus normal. Is the opposite true? If for every compatible metric $\rho$, $(CL(X),W_\rho)$ is normal, does $X$ have  to be separable?

\bigskip

Let us start with a result, which connects this section with the previous one. Note that a metric space is generalized compact (GK) iff for every closed discrete subspace $D\subset X$ we have  $|D|<d(X)$; see \cite[Theorem 7]{barcos}.
\begin{thm}
	If $CL(X)$ is normal then we have the following possibilities:
	\begin{enumerate}
		\item $X$ is not GK. Then $2^{d(CL(X))}=2^{d(X)}$.
		\item $X$ is GK and $d(CL(X))=d(X)$.
		\item $X$ is GK and $d(CL(X))<d(X)$. Then $2^{d(CL(X))}=2^{<d(X)}=sup\{2^\kappa;\kappa<d(X)\}$.
	\end{enumerate}
	Moreover if GCH holds, then always $d(CL(X))=d(X)$.
\end{thm}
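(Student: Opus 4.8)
The plan is to reduce everything to \emph{Jones' Lemma}: in a normal space, if $D$ is a closed discrete subspace and $S$ is a dense subspace, then $2^{|D|}\le 2^{|S|}$. Applying this to the normal space $CL(X)$ with $S$ a dense set of cardinality $d(CL(X))$ yields $2^{|D|}\le 2^{d(CL(X))}$ for every closed discrete $D\subset CL(X)$. The two remaining ingredients are already at hand: $d(CL(X))\le d(X)$ by Proposition \ref{density}, and the fact that $X$ sits inside $CL(X)$ as the closed subspace of singletons, so every closed discrete subspace of $X$ induces a closed discrete subspace of $CL(X)$ of the same cardinality (its relative topology is the metric topology of $X$, and it is closed in $CL(X)$ because $X$ is). I also use $e(X)=d(X)$ from Note \ref{notemetric}.

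First I would treat the case that $X$ is not GK. By definition this yields a closed discrete $D\subset X$ with $|D|\not< d(X)$, and since $e(X)=d(X)$ forces $|D|\le d(X)$, in fact $|D|=d(X)$. Transporting $D$ into $CL(X)$ and invoking Jones' Lemma gives $2^{d(X)}=2^{|D|}\le 2^{d(CL(X))}$, while $d(CL(X))\le d(X)$ gives the reverse inequality of the two powers; hence $2^{d(CL(X))}=2^{d(X)}$, which is conclusion (1). Conclusion (2) is merely the possibility $d(CL(X))=d(X)$, so there is nothing to prove there.

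Next I would handle the third case, $X$ GK together with $d(CL(X))<d(X)$. Here I use that in a metric space the cardinalities of uniformly discrete (hence closed discrete) subsets are downward closed, so since their supremum is $e(X)=d(X)$, for every $\kappa<d(X)$ there is a closed discrete subset of $X$ of cardinality exactly $\kappa$. Each such set, viewed in $CL(X)$, yields $2^\kappa\le 2^{d(CL(X))}$ by Jones' Lemma; taking the supremum over $\kappa<d(X)$ gives $2^{<d(X)}\le 2^{d(CL(X))}$. Conversely $d(CL(X))<d(X)$ means $d(CL(X))$ is itself one of the $\kappa<d(X)$, so $2^{d(CL(X))}\le 2^{<d(X)}$, which establishes the equality in conclusion (3).

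For the \emph{GCH} statement the crucial observation is that if $X$ is GK then $d(X)$ must be a limit cardinal: were $d(X)=\lambda^+$, every closed discrete subset would have cardinality $<\lambda^+$, i.e. $\le\lambda$, forcing $e(X)\le\lambda<d(X)$ and contradicting $e(X)=d(X)$. Under GCH the map $\mu\mapsto 2^\mu=\mu^+$ is injective, so in case (1) the equation $2^{d(CL(X))}=2^{d(X)}$ immediately gives $d(CL(X))=d(X)$, and case (2) is this equality itself. In case (3), $d(X)$ being a limit cardinal gives $2^{<d(X)}=\sup\{\kappa^+:\kappa<d(X)\}=d(X)$, a limit cardinal, whereas $2^{d(CL(X))}=d(CL(X))^+$ is a successor cardinal; these cannot coincide, so case (3) is vacuous under GCH, and $d(CL(X))=d(X)$ in every case. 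The main obstacle I anticipate is not a hard estimate but the bookkeeping in case (3): one must guarantee closed discrete subsets of $X$ of every cardinality below $d(X)$ (this is exactly where the metric structure and $e(X)=d(X)$ are needed) and compute the strict supremum $2^{<d(X)}$ correctly, together with the limit-versus-successor dichotomy that drives the GCH conclusion.
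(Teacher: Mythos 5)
Your proof is correct and is essentially the paper's own argument: the paper proves Jones' Lemma inline (counting continuous functions into $[0,1]$ and using the Tietze extension theorem on the closed discrete set $D\subset X\subset CL(X)$), and then runs exactly your case analysis using Proposition \ref{density} and Note \ref{notemetric}. The only cosmetic difference is the GCH step, which the paper dispatches in one line --- under GCH the inequality $2^{card(D)}\leq 2^{d(CL(X))}$ already gives $card(D)\leq d(CL(X))$ for every closed discrete $D$, hence $d(X)=e(X)\leq d(CL(X))$ --- instead of your limit-versus-successor dichotomy, but both rest on the same injectivity of $\mu\mapsto 2^\mu$ under GCH.
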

\begin{proof}
	Let $D$ be a closed discrete subset of $X$. The number of continuous functions from $D$ to $[0,1]$ is equal to $2^{card(D)}$. The number of continuous functions from $CL(X)$ to $[0,1]$ is less than or equal to $2^{d(CL(X))}$. From Tietze extension theorem we have $2^{card(D)}\leq 2^{d(CL(X))}$.

	$(1)$ If $X$ is not GK, we can take a closed discrete subset $D\subset X$ such that $card(D)=d(X)$ and the rest follows from Proposition \ref{density}.

	$(3)$ If $X$ is GK and $d(CL(X))<d(X)$, then clearly $2^{d(CL(X))}\leq 2^{<d(X)}$. For every $\kappa<d(X)$ we can take a uniformly discrete subset $D\subset X$ such that $\kappa\leq card(D)<d(X)$ by Note \ref{notemetric}; i.e. $2^{\kappa}\leq2^{card(D)}\leq 2^{d(CL(X))}$ and the rest follows.

	Under GCH $2^{card(D)}\leq 2^{d(CL(X))}$ implies $card(D)\leq d(CL(X))$ hence $d(X)=e(X)\leq d(CL(X))\leq d(X)$.
\end{proof}

\bigskip

	Take $\kappa\geq\aleph_0$. Consider a discrete metric space $X$ with $|X|=2^\kappa$ (with $0-1$ metric). By Example \ref{hodelex} we have $d(CL(X))=\log d(X)=\kappa$ and thus $2^{d(X)}=2^{2^\kappa}>2^\kappa=2^{d(CL(X))}$. Since $X$ is not GK then it cannot be normal. This result can be generalized for a discrete metric space with $|X|>\aleph_0$.

\begin{lem}\label{fell}
	Let $\epsilon>0$. Let $(X,\rho)$ be a metric space with $0-\epsilon$ metric $\rho$. If $(CL(X),W_\rho)$ is normal then $X$ is countable.
\end{lem}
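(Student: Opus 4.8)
The plan is to show that for a $0$-$\epsilon$ metric the Wijsman topology on $CL(X)$ is \emph{literally the same} as the Fell topology, and then to quote the known characterization of normality of the Fell topology from \cite{hollevpel}. This is cleaner than trying to force the cardinal-arithmetic machinery of the previous section to yield a ZFC contradiction, which it does not do for all uncountable cardinalities (e.g. when $\aleph_0<|X|<2^{\aleph_0}$ and $2^{\aleph_0}=2^{\aleph_1}$).

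First I would record the effect of the hypothesis on distances: since $\rho$ takes only the values $0$ and $\epsilon$, for every $x\in X$ and $A\in CL(X)$ we have $\rho(x,A)=0$ if $x\in A$ and $\rho(x,A)=\epsilon$ otherwise. Hence $X$ is a discrete (indeed uniformly discrete) metric space, its only dense subset is $E=X$, and its compact subsets are exactly the finite ones. Fixing any rational $\alpha$ with $0<\alpha<\epsilon$, I would then evaluate the Wijsman subbasic sets and find $S^-(x,\alpha)=\{A;\rho(x,A)<\alpha\}=\{A;x\in A\}=\{x\}^-$ and $S^+(x,\alpha)=\{A;\rho(x,A)>\alpha\}=\{A;x\notin A\}=(X\setminus\{x\})^+$; the remaining values of $\alpha$ only produce $CL(X)$, $\emptyset$, or repetitions, so they contribute nothing new.

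Next I would compare these with a subbase of the Fell topology, namely $\{U^-;U$ open$\}\cup\{(X\setminus K)^+;K$ compact$\}$. On the lower side there is nothing to prove, since lower Fell, lower Wijsman and $V^-$ always coincide, and $U^-=\bigcup_{x\in U}\{x\}^-$. On the upper side, compactness means finiteness, so any Fell subbasic set $(X\setminus K)^+$ with $K=\{x_1,\dots,x_n\}$ equals $\bigcap_{i=1}^n(X\setminus\{x_i\})^+$, a finite intersection of the sets $S^+(x_i,\alpha)$; conversely each $(X\setminus\{x\})^+=S^+(x,\alpha)$ is Fell-open. Thus $W_\rho$ and the Fell topology share a subbase and agree on $CL(X)$.

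Finally I would invoke the theorem of Hol\'a, Levi and Pelant \cite{hollevpel}: the Fell topology on $CL(X)$ is normal if and only if $X$ is locally compact and Lindel\"of. A discrete space is automatically locally compact, so normality of $(CL(X),W_\rho)$ forces $X$ to be Lindel\"of, and a discrete Lindel\"of space is countable. The only genuinely delicate point is the identification of the upper Wijsman sets with the upper Fell sets: it relies entirely on all nonzero distances being equal to the single value $\epsilon$, so that the condition $\rho(x,A)>\alpha$ with $\alpha<\epsilon$ says nothing more than $x\notin A$. For a merely uniformly discrete metric the closed balls $B(x,\beta)$ need not be finite, the sets $S^+(x,\alpha)$ need no longer be complements of compacta, and the two topologies can genuinely differ; this is exactly why the lemma is stated for the $0$-$\epsilon$ metric.
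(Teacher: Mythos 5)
Your proof is correct and takes essentially the same route as the paper: identify the Wijsman topology with the Fell topology for the $0$-$\epsilon$ metric, then invoke the Hol\'a--Levi--Pelant theorem \cite{hollevpel} to get Lindel\"ofness, hence countability, of the discrete space $X$. The only cosmetic difference is that the paper gets the coincidence $W_\rho=$ Fell by citing Beer's ``nice closed balls'' criterion \cite{beer} (every proper closed ball here is a singleton, hence compact), while you verify it directly by computing the subbases; both justifications are valid.
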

\begin{proof}
	The metric space $X$ has nice closed balls, thus by \cite{beer} the Wijsman topology on $CL(X)$ coincides with the Fell topology. By \cite{hollevpel} the normality of the Fell topology on $CL(X)$ implies the Lindel\"ofness of $X$; so we are done.
\end{proof}

	Note that in this case we have that normality of the Wijsman topology is equivalent to metrizability. To apply this result in some other cases, we will use the following lemma.
\begin{lem}\label{subset}
	Let $(X,\rho)$ be a metric space, $Y$ be a closed discrete subset of $X$. Suppose that for every $x\in X\setminus Y$ the following property is fulfilled:

	There is $\eta_x$ and at most one $y_x\in Y$ with $\rho(x,y_x)<\eta_x$, for every other $y\in Y$ holds $\rho(x,y)=\eta_x$.

	Then $(CL(Y),W_{\rho_{|Y}})$ is a closed subspace of $(CL(X),W_\rho)$.
\end{lem}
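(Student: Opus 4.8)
The plan is to realize $CL(Y)$ concretely as a set of closed subsets of $X$, to match the subspace topology with $W_{\rho_{|Y}}$, and then to separate every point outside $CL(Y)$ by a single lower subbasic set.

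First I would record the set-theoretic picture. Since $Y$ is closed and discrete in $X$, every nonempty subset of $Y$ is already closed in $X$, so $CL(Y)$ (which, $Y$ being discrete, consists of \emph{all} nonempty subsets of $Y$) is identified with $\{A\in CL(X);A\subseteq Y\}$, and the natural map $CL(Y)\to CL(X)$ is injective. Discreteness also furnishes, for each $y\in Y$, a radius $\delta_y>0$ with $\rho(y,y'')\geq\delta_y$ for all $y''\in Y\setminus\{y\}$; I will need these radii below.

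Next I would identify the subspace topology. Recall that $W_\rho$ is the initial topology of all functionals $\rho(x,\cdot)$, $x\in X$, so the family $\{S^-_\rho(x,\alpha),S^+_\rho(x,\alpha);x\in X,\alpha\in Q^+\}$ is a subbase. The functionals with $x\in Y$ restrict to exactly the defining functionals of $W_{\rho_{|Y}}$, so $W_{\rho_{|Y}}$ is coarser than the subspace topology. For the reverse inclusion I analyse $x\in X\setminus Y$: the hypothesis forces $\rho(x,A)$ to take, for $A\in CL(Y)$, only the two values $\rho(x,y_x)$ (when $y_x\in A$) and $\eta_x$ (otherwise), with $\rho(x,y_x)<\eta_x$ — and the single constant value $\eta_x$ when no $y_x$ exists. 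Hence each of $S^-_\rho(x,\alpha)\cap CL(Y)$ and $S^+_\rho(x,\alpha)\cap CL(Y)$ equals one of $\emptyset$, $CL(Y)$, $\{A;y_x\in A\}$, or $\{A;y_x\notin A\}$, according to where $\alpha$ lies relative to these two values. The last two sets are $W_{\rho_{|Y}}$-open because, using $\delta_{y_x}$, one checks $\{A;y_x\in A\}=S^-_{\rho_{|Y}}(y_x,\delta_{y_x})$ and $\{A;y_x\notin A\}=S^+_{\rho_{|Y}}(y_x,\delta_{y_x}/2)$. Thus every subbasic set of the subspace topology is $W_{\rho_{|Y}}$-open, and the two topologies coincide.

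Finally I would prove closedness. Given $A\in CL(X)$ with $A\not\subseteq Y$, choose $z\in A\setminus Y$; since $Y$ is closed, $c:=\rho(z,Y)>0$. Then $\rho(z,A)=0<c$, whereas every $B\in CL(Y)$ satisfies $\rho(z,B)\geq\rho(z,Y)=c$, so $S^-_\rho(z,c)$ is a $W_\rho$-neighborhood of $A$ disjoint from $CL(Y)$; hence $CL(Y)$ is closed. The main obstacle is the middle step: verifying that the distance functionals based at points of $X\setminus Y$ add nothing to the subspace topology. This is precisely where both hypotheses are essential — the two-value condition collapses each such functional to an indicator of whether $y_x\in A$, and the discreteness of $Y$ supplies the radius $\delta_{y_x}$ needed to recognize that indicator as a genuine $W_{\rho_{|Y}}$-open (and its complement as $W_{\rho_{|Y}}$-open) set.
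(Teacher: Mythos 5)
Your proposal is correct and takes essentially the same route as the paper: the heart of both arguments is that, by the stated hypothesis together with the discreteness of $Y$, a distance functional $\rho(x,\cdot)$ with $x\in X\setminus Y$ carries on $CL(Y)$ no information beyond whether $y_x$ belongs to the set, and that information is already detected by $W_{\rho_{|Y}}$ --- your computation of the restricted subbasic sets is just the open-set dual of the paper's three-case net-convergence check. The only cosmetic difference is the closedness step, where the paper quotes the known fact that $CL(Y)$ is closed in $(CL(X),V^-)$ while you give a direct separation by $S^-_\rho(z,c)$.
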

\begin{proof}
It is well-known that if $Y$ is a closed subset of $X$, then $CL(Y)$ is a closed set in $(CL(X),V^-)$;
thus also in $(CL(X),W_\rho)$.
 Suppose $A_\lambda\in CL(Y)$ converges to $A\in CL(Y)$ with respect to $W_\rho$.  Then for every $x\in X$, $\rho(x,A_\lambda)$ converges to $\rho(x,A)$ and since $Y\subset X$ then $A_\lambda$ converges to $A$ with respect to $W_{\rho_{|Y}}$. Now suppose $A_\lambda\in CL(Y)$ converges to $A\in CL(Y)$ with respect to $W_{\rho_{|Y}}$ and take $x\in X\setminus Y$. We have three possibilities:
	\begin{enumerate}
		\item\label{a}	for every $y\in Y$ holds $\rho(x,y)=\eta_x$,
		\item\label{b}	there is $y_x\in Y$ with $\rho(x,y_x)=\delta<\eta_x$ and $\rho(x,A)<\eta_x$,
		\item\label{c}	there is $y_x\in Y$ with $\rho(x,y_x)=\delta<\eta_x$ and $\rho(x,A)=\eta_x$.
	\end{enumerate}
	
	In the case  $(\ref{a})$ it holds $\rho(x,A_\lambda)=\eta_x\to\eta_x=\rho(x,A)$. In the case  $(\ref{b})$  $y_x\in A$. So eventually $y_x\in A_\lambda$ and hence $\rho(x,A_\lambda)\to\delta=\rho(x,A)$. Finally in the case  $(\ref{c})$ $y_x\not\in A$. Eventually $y_x\not\in A_\lambda$ and hence $\rho(x,A_\lambda)\to\eta_x=\rho(x,A)$.
	
\end{proof}

We can use this lemma in the following example.
\begin{exa}
	Let $\mathfrak m$ be a cardinal number and $J(\mathfrak m)$ be the hedgehog of spininess $\mathfrak m$ (exactly as in \cite[4.1.5]{engelking}). If $CL(J(\mathfrak m))$ equipped with the Wijsman topology is normal, then $\mathfrak m\leq\aleph_0$.
\end{exa}
\begin{proof}
	$J(\mathfrak m)=(I\times S)/\approx$, where $I=[0,1]$, $S$ is an index set with $|S|=\mathfrak m$ and $\approx$ is an equivalence relation; $(x,s)\approx(y,t)$ iff $x=0=y$ or $x=y$ and $s=t$. $J(\mathfrak m)$ is equipped with the metric $\rho$:
	$$\rho((x,s),(y,t))=\left\{\begin{array}{cc}
		|x-y|, &	\text{if }s=t\\
		x+y,	&	\text{if }s\not=t.
	\end{array}\right.$$
	Consider $Y=\{(1,s);s\in S\}$. Since $Y$ fulfills the condition in Lemma \ref{subset} $(CL(Y),W_{\rho_{|Y}})$ is a closed subset of $(CL(J(\mathfrak m)),W_\rho)$ and hence normal. $\rho_{|Y}$ is $0-2$ metric and thus $Y$ is countable and so is $S$.
\end{proof}


For a metric $\rho$ on $X$ and $\eta>0$ denote by $\rho_\eta$ a uniformly equivalent metric defined by $\rho_\eta(x,y)=min\{\rho(x,y),\eta\}$ for $x,y\in X$.
\begin{thm}
	Let $(X,\rho)$ be a metric space. If for every $\epsilon>0$ there is $\eta\in(0,\epsilon)$ such that $(CL(X),W_{\rho_\eta})$ is normal, then $X$ is separable.
\end{thm}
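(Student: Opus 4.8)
The plan is to argue by contradiction: assuming $X$ is not separable, I would produce, for a suitable small $\eta$, a closed subspace of $(CL(X),W_{\rho_\eta})$ that is a hyperspace over a $0$--$\eta$ metric space and is forced to be uncountable, contradicting Lemma \ref{fell}.

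First I would extract a large uniformly discrete set. Since $X$ is not separable, $d(X)>\aleph_0$, so by Note \ref{notemetric} there is a uniformly discrete subset $D\subset X$ with $|D|>\aleph_0$; fix $\delta>0$ with $\rho(x,y)\geq\delta$ for all distinct $x,y\in D$. (Note that the supremum in Note \ref{notemetric} being uncountable forces some uniformly discrete set to be genuinely uncountable, since a supremum of countable cardinals is countable.) Now apply the hypothesis with $\epsilon=\delta/2$ to obtain $\eta\in(0,\delta/2)$ such that $(CL(X),W_{\rho_\eta})$ is normal; the point of this choice is that $2\eta<\delta$.

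Next I would check that $D$, viewed inside $(X,\rho_\eta)$, satisfies the hypotheses of Lemma \ref{subset}. Since $\rho_\eta$ is uniformly equivalent to $\rho$, the set $D$ remains uniformly discrete and hence closed and discrete. For $x\in X\setminus D$, the points $y\in D$ with $\rho_\eta(x,y)<\eta$ are exactly those with $\rho(x,y)<\eta$; but two distinct such points $y_1,y_2$ would give $\rho(y_1,y_2)\leq\rho(y_1,x)+\rho(x,y_2)<2\eta<\delta$, contradicting the $\delta$-separation of $D$. Hence there is at most one $y_x\in D$ with $\rho_\eta(x,y_x)<\eta$, and every other $y\in D$ satisfies $\rho(x,y)\geq\eta$, so $\rho_\eta(x,y)=\eta$. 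Taking $\eta_x=\eta$ thus fulfills the condition of Lemma \ref{subset}.

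By Lemma \ref{subset}, $(CL(D),W_{(\rho_\eta)|_D})$ is a closed subspace of the normal space $(CL(X),W_{\rho_\eta})$, hence normal, being a closed subspace of a normal space. But $(\rho_\eta)|_D$ assigns distance exactly $\eta$ to every pair of distinct points of $D$, i.e. it is a $0$--$\eta$ metric, so Lemma \ref{fell} forces $D$ to be countable, contradicting $|D|>\aleph_0$. Therefore $X$ is separable. The step I expect to be the crux is the verification in the third paragraph that the ``at most one nearby point'' condition of Lemma \ref{subset} holds; the calibration $\epsilon=\delta/2$, which guarantees $2\eta<\delta$, is precisely what makes the triangle-inequality estimate close.
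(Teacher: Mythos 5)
Your proof is correct and follows essentially the same route as the paper: extract an uncountable uniformly discrete set, choose $\eta$ below half its separation constant, invoke Lemma \ref{subset} to realize $(CL(D),W_{(\rho_\eta)|_D})$ as a closed (hence normal) subspace, and conclude via Lemma \ref{fell} since $(\rho_\eta)|_D$ is a $0$--$\eta$ metric. The only difference is that you spell out the triangle-inequality verification of the hypothesis of Lemma \ref{subset}, which the paper dismisses with ``one can easily check.''
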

\begin{proof}
	Suppose $X$ is not separable. Then there is an $\epsilon-$discrete set $Y\subset X$ with $|Y|=\aleph_1$. Take $\eta<\epsilon/2$ such that $(CL(X),W_{\rho_\eta})$ is normal. One can easily check the condition in Lemma \ref{subset}, so $(CL(Y),W_{\rho_{\eta|Y}})$ is a closed subset of $(CL(X),W_{\rho_\eta})$ and hence normal. Since $\rho_{\eta|Y}$ is $0-\eta$ metric on $Y$, then $|Y|=\aleph_0$ by Lemma \ref{fell}, which contradicts to the supposition.
\end{proof}
\begin{cor}
	Let $(X,\rho)$ be a metric space. If for every metric $\delta$ (uniformly) equivalent to $\rho$,  $(CL(X),W_\delta)$ is normal, then $X$ is separable.
\end{cor}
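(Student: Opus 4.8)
The plan is to deduce the corollary directly from the theorem that immediately precedes it, exploiting the fact that the truncated metrics $\rho_\eta$ are themselves uniformly equivalent to $\rho$.

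First I would recall that, as observed just before the theorem, for each $\eta>0$ the metric $\rho_\eta(x,y)=\min\{\rho(x,y),\eta\}$ is uniformly equivalent to $\rho$. Hence, under either reading of the hypothesis---and in particular under the stronger \emph{uniformly equivalent} reading, which yields the more general statement, since it imposes normality on fewer metrics---the family over which normality is assumed contains every $\rho_\eta$. Consequently the hypothesis of the corollary guarantees that $(CL(X),W_{\rho_\eta})$ is normal for every $\eta>0$.

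Next I would verify that this is enough to invoke the preceding theorem. That theorem only requires, for each $\epsilon>0$, the existence of some $\eta\in(0,\epsilon)$ for which $(CL(X),W_{\rho_\eta})$ is normal; taking for instance $\eta=\epsilon/2$ does the job. Applying the theorem then yields that $X$ is separable, which is exactly the desired conclusion.

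There is essentially no obstacle here: the whole difficulty has already been absorbed into the preceding theorem, whose proof reduces the non-separable case, via Lemma \ref{subset}, to the $0$-$\eta$ metric situation handled by Lemma \ref{fell} (and ultimately by the normality criterion for the Fell topology). The only point worth flagging is the twofold reading of ``equivalent''; because each $\rho_\eta$ is already uniformly equivalent to $\rho$, the argument goes through verbatim in both cases, so the parenthetical requires no additional work.
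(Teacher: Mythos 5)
Your proof is correct and is precisely the argument the paper intends: the corollary is stated without proof because it follows immediately from the preceding theorem, each truncated metric $\rho_\eta$ being uniformly equivalent to $\rho$, so the hypothesis (under either reading of ``equivalent'') supplies normality of $(CL(X),W_{\rho_\eta})$ for all $\eta>0$. Your remark that the ``uniformly equivalent'' reading is the stronger form of the corollary, and that both readings go through since every $\rho_\eta$ is uniformly equivalent to $\rho$, is also accurate.
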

\begin{prop}
	Let $(X,\rho)$ be a metric space. The following are equivalent:
	\begin{enumerate}
		\item	Every closed proper ball is totally bounded;
		\item	For every $\eta>0$ $W_\rho=W_{\rho_\eta}$ on $CL(X)$.
	\end{enumerate}
\end{prop}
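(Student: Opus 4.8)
The plan is to compare the subbases of $W_\rho$ and $W_{\rho_\eta}$ and reduce the whole equivalence to the behaviour of the upper parts. Since $\rho_\eta(x,A)=\min\{\rho(x,A),\eta\}$, a direct computation gives for the subbasic sets that $S^-_{\rho_\eta}(x,\alpha)=S^-_\rho(x,\alpha)$ when $\alpha\le\eta$ (and $=CL(X)$ when $\alpha>\eta$), while $S^+_{\rho_\eta}(x,\alpha)=S^+_\rho(x,\alpha)$ when $\alpha<\eta$ (and $=\emptyset$ when $\alpha\ge\eta$). Hence $W_{\rho_\eta}\subseteq W_\rho$ always. Moreover $\rho_\eta$ is a metric inducing the same topology on $X$, so $W^-_{\rho_\eta}=V^-=W^-_\rho$, and every $S^-_\rho(x,\alpha)$ is already $W_{\rho_\eta}$-open. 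Consequently $W_\rho=W_{\rho_\eta}$ holds if and only if every set $S^+_\rho(x,\alpha)$ is $W_{\rho_\eta}$-open, and the proposition reduces to deciding when this happens; note that the upper $W_{\rho_\eta}$-conditions only ever use balls of radius $<\eta$.

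For $(1)\Rightarrow(2)$, fix $\eta>0$, a set $S^+_\rho(x,\alpha)$ and a point $A_0$ in it, so $\rho(x,A_0)=\alpha'>\alpha$ (finite, as $A_0\ne\emptyset$). I would choose $\gamma$ with $\alpha<\gamma<\alpha'$; then $A_0$ avoids the closed ball $B(x,\gamma)$ with positive gap $\delta=\alpha'-\gamma$, and $B(x,\gamma)$ is a proper ball, hence totally bounded by $(1)$. Cover $B(x,\gamma)$ by finitely many open balls $S(x_i,\beta)$, $i=1,\dots,n$, with $x_i\in B(x,\gamma)$ and $\beta<\min\{\eta,\delta\}$. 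The set $N=\bigcap_{i} S^+_{\rho_\eta}(x_i,\beta)$ is a $W_{\rho_\eta}$-neighborhood of $A_0$, since each $x_i$ lies within $\gamma$ of $x$, giving $\rho(x_i,A_0)\ge\delta>\beta$; and any $A\in N$ must avoid $B(x,\gamma)$, because a point of $A$ in $B(x,\gamma)$ would lie in some $S(x_i,\beta)$ and force $\rho(x_i,A)<\beta$. Thus $N\subseteq S^+_\rho(x,\alpha)$, so this set is $W_{\rho_\eta}$-open.

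For $(2)\Rightarrow(1)$ I would argue contrapositively. If some proper closed ball $B(x,\alpha)$ is not totally bounded, fix $\epsilon>0$ and an infinite $\epsilon$-separated set $\{y_k:k\in\omega\}\subseteq B(x,\alpha)$; by properness pick $z$ with $\rho(x,z)>\alpha$, set $A_0=\{z\}\in S^+_\rho(x,\alpha)$, and take any $\eta<\epsilon/2$. An arbitrary basic $W_{\rho_\eta}$-neighborhood of $A_0$ imposes finitely many lower conditions and finitely many upper conditions $\rho(v_j,\,\cdot\,)>\beta_j$ with $\beta_j<\eta$. Each ball $B(v_j,\beta_j)$ has radius $<\eta<\epsilon/2$, hence contains at most one $y_k$; as there are infinitely many $y_k$, some $y_{k_0}$ avoids all the $B(v_j,\beta_j)$. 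Then $A=A_0\cup\{y_{k_0}\}$ satisfies every lower condition (enlarging a set only decreases distances) and every upper condition, since $\rho(v_j,A)=\min\{\rho(v_j,A_0),\rho(v_j,y_{k_0})\}>\beta_j$; so $A$ lies in the neighborhood, yet $y_{k_0}\in B(x,\alpha)$ forces $\rho(x,A)\le\alpha$, whence $A\notin S^+_\rho(x,\alpha)$. Thus $S^+_\rho(x,\alpha)$ is $W_\rho$-open but not $W_{\rho_\eta}$-open, contradicting $(2)$.

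The main obstacle is the single observation underlying both directions: upper $W_{\rho_\eta}$-neighborhoods see only balls of radius $<\eta$. In $(1)\Rightarrow(2)$ total boundedness is needed precisely so that the relevant ball can be trapped inside finitely many such small balls, and in $(2)\Rightarrow(1)$ the failure of this is exploited by the ``add one separated point'' trick, whose delicate part is verifying that the enlarged set still meets all the finitely many small-radius upper constraints.
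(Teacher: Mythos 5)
Your proof is correct. The direction $(1)\Rightarrow(2)$ is essentially the paper's argument: both pick an intermediate radius $\gamma\in(\alpha,\rho(x,A_0))$, observe that $B_\rho(x,\gamma)$ is a proper ball, cover it by finitely many balls of radius smaller than both $\eta$ and the gap, and use the corresponding upper $\rho_\eta$-constraints as a neighborhood of $A_0$ inside $S^+_\rho(x,\alpha)$ (your extra care in placing the centers $x_i$ inside $B_\rho(x,\gamma)$ is a clean way to handle a detail the paper treats implicitly). In the converse direction you take a genuinely different route. The paper argues directly: it asserts that $(2)$ gives $W^+_\rho\subset W^+_{\rho_\eta}$, so some finite intersection $\bigcap_{y\in F}S^+_{\rho_\eta}(y,\beta_y)$ with all $\beta_y<\eta$ lies inside $S^+_\rho(x,\alpha)$, and then reads off $B_\rho(x,\alpha)\subset B_\rho(F,\eta)$ from the chain $B_\rho(F,\eta)^{C+}\subset\bigcap_{y\in F}S^+_{\rho_\eta}(y,\beta_y)\subset S^+_\rho(x,\alpha)\subset B_\rho(x,\alpha)^{C+}$, obtaining the finite cover explicitly. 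You instead argue contrapositively, extracting an infinite $\epsilon$-separated set from a non--totally-bounded proper ball and defeating every basic $W_{\rho_\eta}$-neighborhood of $\{z\}$ by adjoining one well-chosen separated point. The trade-off is instructive: the paper's version is shorter, but its opening step---passing from equality of the full topologies to containment of the upper parts, i.e.\ discarding the lower ($V^-$) constraints of a basic neighborhood---is exactly the nontrivial point, and it is justified by the same enlargement trick ($A\mapsto A\cup\{y\}$ only decreases distance functionals, so lower constraints survive) that you perform explicitly. So your argument can be read as the paper's proof with its one implicit step filled in; conversely, if you isolated that step as a lemma (a $W_\rho$-open set of the form $S^+_\rho(x,\alpha)$ that is $W_{\rho_\eta}$-open is already $W^+_{\rho_\eta}$-open), your contrapositive would collapse to the paper's direct argument.
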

\begin{proof}
	Naturally $W_\rho^-=W_{\rho_\eta}^-$. For $\alpha<\eta$ holds $S^+_{\rho_\eta}(x,\alpha)=S^+_\rho(x,\alpha)$ and for $\alpha\geq\eta$   $S^+_{\rho_\eta}(x,\alpha)=\emptyset$. Thus $W_\rho^+\supset W_{\rho_\eta}^+$.
	
	$(1)\Rightarrow (2):$ Take $A\in S^+_\rho(x,\alpha)$ so $\rho(x,A)=\beta>\alpha$. Choose $\gamma\in(\alpha,\beta)$ such that $B_\rho(x,\gamma)$ is proper and choose $0<\epsilon<\min\{\eta,(\beta-\gamma)/2\}$ and finite $F\subset X$ such that $S_\rho(F,\epsilon)\supset B_\rho(x,\gamma)$. Then $\rho(F,A)>\epsilon$, i.e. $\rho_\eta(F,A)>\epsilon$ and so $A\in\bigcap_{x\in F}S^+_{\rho_\eta}(x,\epsilon)\subset S^+_\rho(x,\alpha)$, hence $W_\rho^+\subset W_{\rho_\eta}^+$.
	
	$(2)\Rightarrow (1):$ Consider a closed proper ball $B_\rho(x,\alpha)$. For any $\eta>0$, $W_\rho^+\subset W_{\rho_\eta}^+$ so there is finite $F\subset X$ and for every $x\in F$ there is $\beta_x<\eta$ such that $\bigcap_{x\in F}S_{\rho_\eta}^+(x,\beta_x)\subset S_\rho^+(x,\alpha)$. Therefore $$B_\rho(F,\eta)^{C+}\subset \bigcap_{x\in F}S_\rho^+(x,\beta_x)\subset\bigcap_{x\in F}S_{\rho_\eta}^+(x,\beta_x)\subset S_\rho^+(x,\alpha)\subset B_\rho(x,\alpha)^{C+}$$ and hence $B_\rho(x,\alpha)\subset B_\rho(F,\eta)$; i.e. $B_\rho(x,\alpha)$ is totally bounded.
\end{proof}
\begin{cor}
	Let $(X,\rho)$ be a metric space such that every closed proper ball is totally bounded. If $(CL(X),W_\rho)$ is normal, then $X$ is separable.
\end{cor}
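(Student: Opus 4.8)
The plan is to combine the immediately preceding Proposition with the earlier separability Theorem; no new construction is required. First I would invoke the Proposition. By hypothesis every closed proper ball of $(X,\rho)$ is totally bounded, which is exactly condition (1) of that Proposition, so condition (2) holds: $W_\rho = W_{\rho_\eta}$ on $CL(X)$ for every $\eta>0$. Thus for each $\eta>0$ the two names $W_\rho$ and $W_{\rho_\eta}$ refer to the same topology on the same underlying set $CL(X)$.

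Next I would transfer the normality hypothesis. Since $(CL(X),W_\rho)$ is normal and normality is a property of the topology alone, the equality $W_\rho = W_{\rho_\eta}$ shows that $(CL(X),W_{\rho_\eta})$ is normal for \emph{every} $\eta>0$. This is a stronger statement than the one needed to apply the separability Theorem, which asks only that for every $\epsilon>0$ there exist \emph{some} $\eta\in(0,\epsilon)$ with $(CL(X),W_{\rho_\eta})$ normal. Given any $\epsilon>0$ I would simply take $\eta=\epsilon/2$, and the required normality is already in hand.

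Applying the separability Theorem then yields directly that $X$ is separable, completing the proof. I do not anticipate any genuine obstacle here: the corollary is purely an assembly of two results established earlier in the section, and the only point worth noting is that the identification $W_\rho=W_{\rho_\eta}$ lets the normality assumption pass, verbatim, to each uniformly equivalent metric $\rho_\eta$.
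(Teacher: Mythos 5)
Your proposal is correct and is exactly the argument the paper intends: the corollary is stated without proof immediately after the Proposition, so the intended reasoning is precisely your assembly of the Proposition's implication $(1)\Rightarrow(2)$ with the preceding Theorem. Transferring normality through the topology identity $W_\rho=W_{\rho_\eta}$ and choosing any $\eta\in(0,\epsilon)$ is all that is needed.
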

	This can be generalized in the following way.
\begin{thm}
	Let $\gamma\geq\omega$ be a regular cardinal number (i.e. $cf(\gamma)=\gamma$). Let $(X,\rho)$ be a metric space such that for every $\epsilon > 0$ each closed proper ball  can be covered by less than $\gamma$ $\epsilon-$balls. If $(CL(X),W_\rho)$ is normal, then $d(X)\leq\gamma$.
\end{thm}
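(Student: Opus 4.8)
The plan is to argue by contraposition: assuming $(CL(X),W_\rho)$ is normal, I will deduce $d(X)\le\gamma$. Suppose instead $d(X)>\gamma$. Since $\gamma$ is regular and, by Note \ref{notemetric}, $d(X)$ is the supremum of the cardinalities of uniformly discrete subsets, there is a uniformly discrete $Y\subseteq X$ with $|Y|=\gamma^+$; fix $\epsilon_0>0$ with $Y$ $\epsilon_0$-discrete. Any two points of $Y$ lying in a common ball $S(x,\epsilon_0/2)$ would be at distance $<\epsilon_0$, so each such ball meets $Y$ in at most one point; hence $Y$ is closed and discrete in $X$, every subset of $Y$ is closed in $X$, and $CL(Y)=2^{Y}\setminus\{\emptyset\}$ as a set. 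As $Y$ is closed, $CL(Y)$ is closed in $(CL(X),V^-)=(CL(X),W_\rho^-)$, hence closed in $(CL(X),W_\rho)$, so the subspace $CL(Y)$ is normal. The goal is to contradict this.

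The key structural step is to pin down the subspace topology on $CL(Y)$ from the covering hypothesis and the regularity of $\gamma$. For $x\in X\setminus Y$ and $R$ with $B_\rho(x,R)\ne X$, the ball is proper, hence covered by fewer than $\gamma$ sets of diameter $<\epsilon_0$, each meeting $Y$ in at most one point; thus $|B_\rho(x,R)\cap Y|<\gamma$. Writing $D_x=\sup\{\rho(x,y):y\in Y\}$, a union over $R=D_x-1/n$ gives, using $cf(\gamma)=\gamma$, that $\{y\in Y:\rho(x,y)<D_x\}$ has size $<\gamma$ while $\rho(x,y)=D_x$ for the remaining $\gamma^+$ points (in particular $D_x<\infty$). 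Setting $C_x=\{y\in Y:\rho(x,y)<D_x\}$, of size $<\gamma$, we get $\rho(x,A)=\min_{y\in A\cap C_x}\rho(x,y)$ when $A\cap C_x\ne\emptyset$ and $\rho(x,A)=D_x$ otherwise, so $\rho(x,\cdot)$ depends only on the trace $A\cap C_x$. Combined with the functionals $\rho(y,\cdot)$, $y\in Y$, which recover membership, this identifies the subspace topology on $CL(Y)\cong 2^{Y}\setminus\{\emptyset\}$ as the topology of pointwise convergence refined by the sets $\{A:A\cap F=\emptyset\}$, where $F=B_\rho(z,\alpha)\cap Y$, $z\in X$, each of cardinality $<\gamma$.

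It remains to derive non-normality of this space. When $\gamma=\omega$ every trace $F$ is finite, the refinement is vacuous, and the subspace is $2^{\gamma^+}\setminus\{\mathrm{pt}\}$ with the product topology; this is homeomorphic to the Wijsman hyperspace of an uncountable $0\text{-}1$ metric space, which by Lemma \ref{fell} is not normal, giving the contradiction and recovering $d(X)\le\aleph_0$. For $\gamma>\omega$ the refinement is genuine, and the plan is to exhibit two disjoint closed subsets of $CL(Y)$ that cannot be separated: one would take the closed discrete set $\{\{y\}:y\in Y\}$ together with a second closed set accumulating only at the missing point $\emptyset$, and run a pressing-down (Fodor) argument on $\gamma^+$ to the effect that any candidate separating open sets, being determined by traces on sets of size $<\gamma$, constrain fewer than $\gamma$ of the $\gamma^+$ coordinates of $Y$ and must therefore meet.

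The main obstacle is exactly this last step for $\gamma>\omega$: the subspace carries a topology strictly finer than the product topology, so the clean reduction to the discrete/Fell case available in Lemma \ref{fell} is lost, and normality is not preserved under refinement. One cannot in general thin $Y$ to an equidistant or \emph{independent} subset on which the refining functionals trivialize, since a single point of $X\setminus Y$ may be close to points of $Y$ spread throughout it. The crux is therefore to prove, in ZFC and for an arbitrary family of admissible $(<\gamma)$-sized ``miss'' sets, that this refinement of the product topology on $2^{\gamma^+}\setminus\{\emptyset\}$ is non-normal; the regularity hypothesis $cf(\gamma)=\gamma$ and a free-set/pressing-down argument appear to be the appropriate tools.
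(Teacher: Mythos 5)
Your reduction is fine as far as it goes: passing to a uniformly discrete $Y$ with $|Y|=\gamma^+$, noting $CL(Y)$ is closed in $(CL(X),W_\rho)$ and hence inherits normality, and identifying the subspace topology as the product topology on $2^Y\setminus\{\emptyset\}$ refined by miss sets of size $<\gamma$; the case $\gamma=\omega$ is genuinely complete (there the refinement is vacuous and Lemma \ref{fell} applies). But the theorem is not proved, because for $\gamma>\omega$ you leave the decisive step --- non-normality of the refined product topology --- as an acknowledged ``crux''. Moreover, the heuristic you offer for that step is not repairable as stated: an open set in this topology is an arbitrary union of basic open sets, so a candidate separating open set is in no sense ``determined by traces on sets of size $<\gamma$'' and need not constrain only $<\gamma$ coordinates; note that the unrefined product $2^{\gamma^+}$ is compact and normal, so any non-normality proof must exploit the removed point $\emptyset$ and the refinement through an actual construction, not a constraint-counting argument. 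A smaller flaw: your claim that $C_x=\{y\in Y:\rho(x,y)<D_x\}$ has size $<\gamma$ rests on a countable union of sets of size $<\gamma$, which fails exactly when $\gamma=\omega$; this is harmless only because you treat that case separately.

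For comparison, the paper never needs to identify the subspace topology, and its argument supplies precisely the construction you are missing. Working with an $\epsilon$-discrete set $E=\{x_\alpha;\alpha<\kappa\}$ with $cf(\kappa)>\gamma$ and its tails $D_\alpha=\{x_\beta;\beta\in[\alpha,\kappa)\}$, it separates the closed set of all singletons from the closed set $\{D_\alpha;\alpha<\kappa\}$ by a Urysohn function $f$ (this is the only use of normality), and then builds, by transfinite induction of length $\gamma$, an increasing sequence $\alpha_\lambda<\kappa$ such that $L_\lambda=\{x_{\alpha_\lambda}\}\cup D_{\alpha_{\lambda+1}}$ satisfies $f(L_\lambda)<1/2$: a basic Wijsman neighborhood of a singleton imposes only finitely many miss conditions $S^+(x,\eta_x)$, each ball $B(x,2\eta_x)$ meets $E$ in fewer than $\gamma$ points by the covering hypothesis, and $cf(\kappa)>\gamma$ then yields a tail $D_\mu$ avoiding all of them. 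Finally it shows $L_\lambda\to D_\beta$ in $W_\rho$, where $\beta=\sup_{\lambda<\gamma}\alpha_\lambda<\kappa$, contradicting $f(D_\beta)=1$ and the continuity of $f$; a closing cardinal-arithmetic step upgrades ``every $\epsilon$-discrete set has cofinality at most $\gamma$'' to $d(X)\leq\gamma$ (which, incidentally, is also needed in your approach to justify jumping from $d(X)>\gamma$ to the existence of $Y$ with $cf(|Y|)>\gamma$; for you this is immediate since $\gamma^+$ is regular). If you wish to keep your subspace formulation, this same net construction can be carried out inside $CL(Y)$ using your description of the refinement, but it must actually be carried out; the Fodor-type sketch does not substitute for it.
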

\begin{proof}
	We will prove that for every $\epsilon-$discrete set $E=\{x_\alpha;\alpha<\kappa\}$ we have $cf(\kappa)\leq\gamma$ by contradiction (we can identify $\gamma$ with the first ordinal having the cardinality $\gamma$). Suppose that there is an $\epsilon-$discrete set $E=\{x_\alpha;\alpha<\kappa\}$ with $\gamma<cf(\kappa)$. For every $\alpha < \kappa$ put $D_\alpha=\{x_\beta;\beta\in[\alpha,\kappa)\}$.
	
	1) Observe that if $M$ is an $\epsilon-$discrete set, then for closed proper ball $B(x,\eta)$, $|B(x,\eta)\cap M|<\gamma$.
	
	
	
	2) Put $\mathcal A=\{\{x\};x\in X\}$ and $\mathcal B=\{D_\alpha;\alpha<\kappa\}$. Since $\mathcal A$ and $\mathcal B$ are closed disjoint subsets of $CL(X)$ 
	then there is a continuous function $f:CL(X)\to[0,1]$ with $f(\mathcal A)=\{0\}$ and $f(\mathcal B)=\{1\}$. 
	By a transfinite induction we will construct an increasing $\alpha_\lambda<\kappa$ for $\lambda<\gamma$ such that for $L_\lambda=\{x_{\alpha_\lambda}\}\cup D_{\alpha_{\lambda+1}}$ holds $f(L_\lambda)<1/2$.
	
	Put $\alpha_0=0$.
	Suppose we have $\alpha_\lambda$. Let $\mathcal U$ be a neighborhood of $\{x_{\alpha_\lambda}\}$ such that $f(C)<1/2$ for every $C\in\mathcal U$. There is an open neighborhood $V$ of $x_{\alpha_\lambda}$, finite $F\subset X$ and for every $x\in F$ there is $\eta_x>0$ such that $$\{x_{\alpha_\lambda}\}\in V^-\cap\bigcap_{x\in F}S^+(x,\eta_x)\subset\mathcal U.$$ Since $\gamma<cf(\kappa)$ there must exist $\mu$ such that $\alpha_\lambda<\mu<\kappa$ and $D_\mu\cap\bigcup_{x\in F}B(x,2\eta_x)=\emptyset$. Put $\alpha_{\lambda+1}=\mu$. Then $L_\lambda\in\mathcal U$ and so $f(L_\lambda)<1/2$.
	For a limit ordinal $\lambda$ put $\alpha_\lambda=\sup_{\tau<\lambda}\alpha_\tau$.
	
	3) Put $\beta=\sup_{\lambda<\gamma}\alpha_\lambda$. Since $cf(\kappa)>\gamma$, $\beta<\kappa$. We prove that $L_\lambda\to D_\beta$ for $\lambda\to\gamma$. This is the needed contradiction, because $f(L_\lambda)<1/2$ and $f(D_\beta)=1$.
	 Let $U$ be an open set in $X$ such that $D_\beta\in U^-$. Since for every $\lambda$, $L_\lambda\supset D_\beta$, $L_\lambda\in U^-$. Now let $x\in X$ and $\eta_0>0$ be such that $\rho(x,D_\beta)>\eta_0$. Let $\eta >0$ be such that $\eta_0 < \eta < \rho(x,D_\beta)$.  We claim that there is $\alpha<\beta$ with $B(x,\eta)\cap D_\alpha=\emptyset$. Suppose not. By a transfinite induction we will construct an increasing $\alpha_\lambda<\beta$ for $\lambda<\gamma$ with $x_{\alpha_{\lambda+1}}\in B(x,\eta)$.
	Let $\alpha_0$ be such that $x_{\alpha_0}\in B(x,\eta)\cap D_0$. Suppose now we have $\alpha_\lambda$. Let $\alpha_{\lambda+1}$ be such that $x_{\alpha_{\lambda+1}}\in B(x,\eta)\cap D_{\alpha_\lambda+1}$. Thus $\alpha_{\lambda+1}\geq\alpha_\lambda+1$. For a limit ordinal $\lambda$ put $\alpha_\lambda=\sup_{\tau<\lambda}\alpha_\tau$.
	The set $\{x_{\alpha_{\lambda+1}};\lambda<\gamma\}$ is an $\epsilon-$discrete subset of $B(x,\eta)$ with cardinality $\gamma$, a contradiction.
	
	Now we have that for every $\epsilon-$discrete set $E=\{x_\alpha;\alpha<\kappa\}$, $cf(\kappa)\leq\gamma$. To prove that $|E|\leq\gamma$ suppose first that $|E|=\aleph_{\alpha+1}$. Then we can take $\kappa=\omega_{\alpha+1}$ and so $|E|=|cf(\kappa)|\leq\gamma$. If $|E|=\aleph_\lambda$ for a limit ordinal $\lambda$, then $\aleph_\lambda=sup\{\aleph_{\alpha+1};\alpha<\lambda\}$. By the above we know that $\aleph_{\alpha+1}\leq\gamma$ for every $\alpha<\lambda$. Thus $\aleph_\lambda\leq\gamma$.
	
	
	Since $d(X)=sup\{|E|;E\text{ is an }\epsilon-\text{discrete set}\}$, we have that $d(X)\leq\gamma$.
\end{proof}	
\begin{cor}
	Let $(X,\rho)$ be a metric space such that each closed proper ball is separable. If $(CL(X),W_\rho)$ is normal, then $d(X)\leq\aleph_1$.
\end{cor}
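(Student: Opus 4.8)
The plan is to obtain this as an immediate application of the preceding theorem with the regular cardinal $\gamma=\aleph_1$. First I would note that $\aleph_1\geq\omega$ and that $\aleph_1$ is regular, i.e. $cf(\aleph_1)=\aleph_1$ (being a successor cardinal), so $\gamma=\aleph_1$ is an admissible choice in the theorem. It then suffices to verify the theorem's covering hypothesis for this $\gamma$: for every $\epsilon>0$, each closed proper ball can be covered by fewer than $\aleph_1$ $\epsilon$-balls, that is, by countably many.

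Second, I would deduce this covering property directly from separability. Let $B$ be a closed proper ball and let $\epsilon>0$. Since $B$ is separable, choose a countable dense set $\{d_n;n\in\omega\}\subset B$. Every point of $B$ lies within distance $\epsilon$ of some $d_n$, so the balls $S(d_n,\epsilon)$ cover $B$. This is a cover of $B$ by countably many $\epsilon$-balls, and $\aleph_0<\aleph_1$, so indeed $B$ is covered by fewer than $\aleph_1$ $\epsilon$-balls. As $\epsilon>0$ and the ball were arbitrary, the hypothesis of the theorem holds with $\gamma=\aleph_1$.

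Since normality of $(CL(X),W_\rho)$ is assumed, both hypotheses of the theorem are met, and it yields $d(X)\leq\aleph_1$. There is no real obstacle here: all the substantive work (the transfinite-induction argument producing the converging sequence $L_\lambda\to D_\beta$ that contradicts continuity of the Urysohn function) is already carried out in the theorem. The only point requiring a moment's care is the translation between the cardinal-arithmetic phrasing ``fewer than $\gamma$ $\epsilon$-balls'' and the topological notion of separability, namely that separable metric spaces are exactly those whose every $\epsilon$-cover by balls admits a countable subcover, which is what identifies countable covers with ``fewer than $\aleph_1$''.
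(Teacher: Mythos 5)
Your proof is correct and is exactly the intended derivation: the paper states this corollary without proof as an immediate consequence of the preceding theorem, applied with the regular cardinal $\gamma=\aleph_1$, using precisely your observation that a separable closed proper ball admits a countable cover by $\epsilon$-balls. Nothing further is needed.
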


\begin{lem}\label{factor}
	Let $(X,\rho)$ and $(Y,\delta)$ be metric spaces, let $k:X\to (0,\infty)$ be a function and let $f:X\to Y$ be a surjective map such that for every $x\in X$ and  $y\in Y$,  $\delta(y,f(x))=k(x)\rho(f^{-1}(y),x)$. $(CL(Y),W_\delta)$ can be embedded as a closed subset of $(CL(X),W_\rho)$.
\end{lem}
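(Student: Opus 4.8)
\section*{Proof proposal}

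The plan is to realize the embedding through the fiber-preimage map. Define $\Phi:CL(Y)\to CL(X)$ by $\Phi(B)=f^{-1}(B)$; since $f$ is surjective, $f^{-1}(B)$ is nonempty whenever $B$ is. The first step is to record the fundamental distance formula
\[\rho(x,f^{-1}(B))=\frac{\delta(f(x),B)}{k(x)}\qquad(x\in X,\ \emptyset\neq B\subseteq Y),\]
which follows at once from the hypothesis: writing $f^{-1}(B)=\bigcup_{y\in B}f^{-1}(y)$ gives $\rho(x,f^{-1}(B))=\inf_{y\in B}\rho(f^{-1}(y),x)=\inf_{y\in B}\delta(y,f(x))/k(x)$. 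Note this holds for every nonempty $B\subseteq Y$, closed or not. From the formula, $\rho(x,f^{-1}(B))=0$ forces $\delta(f(x),B)=0$, so $f^{-1}(B)$ is closed whenever $B$ is, and $\Phi$ indeed maps into $CL(X)$. Injectivity of $\Phi$ is immediate from surjectivity of $f$, via $f(f^{-1}(B))=B$.

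Next I would verify that $\Phi$ is an embedding by comparing initial topologies. Composing $\Phi$ with the defining functional $A\mapsto\rho(x,A)$ yields $B\mapsto\delta(f(x),B)/k(x)$, which is $W_\delta$-continuous, so $\Phi$ is continuous. For the converse, the subspace topology on $\Phi(CL(Y))$ pulls back along $\Phi$ to the initial topology generated by $\{B\mapsto\delta(f(x),B)/k(x):x\in X\}$. Because each $k(x)$ is a positive constant and $t\mapsto t/k(x)$ is a self-homeomorphism of $[0,\infty)$, this family generates the same topology as $\{B\mapsto\delta(f(x),B):x\in X\}$, and since $f$ is onto this is exactly $\{B\mapsto\delta(y,B):y\in Y\}$, i.e. $W_\delta$. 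Hence $\Phi$ is a homeomorphism onto its image.

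Finally I would show the image is closed in $(CL(X),W_\rho)$. The key observation is that $\Phi(CL(Y))$ consists precisely of the saturated closed sets, and that saturation is a closed condition expressible through the distance functionals: for $A=f^{-1}(B)$ the fundamental formula gives $k(x)\rho(x,A)=\delta(f(x),B)=\delta(f(x'),B)=k(x')\rho(x',A)$ whenever $f(x)=f(x')$. I would therefore prove the identity
\[\Phi(CL(Y))=\bigcap\{\,A\in CL(X):k(x)\rho(x,A)=k(x')\rho(x',A)\,\},\]
the intersection running over all pairs $x,x'$ with $f(x)=f(x')$. Each set on the right is closed, being the preimage of $\{0\}$ under the $W_\rho$-continuous map $A\mapsto k(x)\rho(x,A)-k(x')\rho(x',A)$, so the intersection is closed. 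The inclusion $\subseteq$ is the computation just displayed; for $\supseteq$ one checks that the stated identity forces any such closed $A$ to be saturated (if $f(x)=f(x')$ and $x\in A$ then $\rho(x,A)=0$, whence $\rho(x',A)=0$ and $x'\in A$), after which $A=f^{-1}(f(A))$, and the fundamental formula with $B=f(A)$ together with surjectivity shows $f(A)$ is $\delta$-closed, so $A\in\Phi(CL(Y))$.

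I expect the main obstacle to be the closedness of the image rather than the embedding. The function $k$ need not be bounded above or below, so one cannot compare $\rho(x,A)$ and $\rho(x',A)$ directly across a fiber. The device that circumvents this is to phrase saturation through the scale-corrected identity $k(x)\rho(x,A)=k(x')\rho(x',A)$, which passes to $W_\rho$-limits automatically and lets closedness be read off as an intersection of zero-sets of continuous functions.
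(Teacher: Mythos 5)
Your proposal is correct, and while it uses the same map ($B\mapsto f^{-1}(B)$) and the same fundamental identity $k(x)\rho(x,f^{-1}(B))=\delta(f(x),B)$ as the paper, the two arguments diverge in how they establish the embedding and the closedness of the image. The paper works entirely with nets: continuity of $g(A)=f^{-1}(A)$ is checked by net convergence, and then $g$ is shown to be a \emph{closed map} by taking a convergent net $g(A_\lambda)\to B$ in $CL(X)$, identifying $B=g(f(B))$, proving $f(B)$ is closed, and showing $A_\lambda\to f(B)$ in $W_\delta$ so that the limit stays in the image of the given closed family. You instead argue at the level of initial topologies for the embedding (the pulled-back subspace topology is generated by $B\mapsto\delta(f(x),B)/k(x)$, which by positivity of $k(x)$ and surjectivity of $f$ is exactly $W_\delta$), and for closedness you give a global characterization of the image as the set of $A\in CL(X)$ satisfying the scale-corrected fiber-consistency equations $k(x)\rho(x,A)=k(x')\rho(x',A)$ for $f(x)=f(x')$ --- each equation cutting out a zero-set of a $W_\rho$-continuous functional, hence closed. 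Both directions of your characterization check out: preimage sets satisfy the equations by the fundamental formula, and conversely the equations force saturation ($A=f^{-1}(f(A))$), after which the formula applied to the not-yet-known-to-be-closed set $f(A)$, together with surjectivity, shows $f(A)$ is closed. What your route buys is the elimination of net arguments and a transparent picture of the image as an explicitly defined closed subset; what the paper's route states on its face is the formally stronger conclusion that $g$ is a closed map, though this is in fact automatic from your version, since any embedding onto a closed subspace is a closed map.
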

\begin{proof}
We will prove the statement in several steps.

1)	For every $x,x_0\in X$,  $\delta(f(x),f(x_0))=k(x_0)\rho(f^{-1}[f(x)],x_0)\leq k(x_0)\rho(x,x_0)$ and hence $f$ is continuous.

2)	For every $M\subset Y$ we have $f^{-1}(\overline M)=\overline{f^{-1}(M)}$: Since $f$ is continuous then $\overline{f^{-1}(M)}\subset f^{-1}(\overline M)$. Now take any $x\in f^{-1}(\overline M)$. There is a sequence $y_n\in M$ converging to $f(x)$. There is $x_n\in f^{-1}(y_n)\subset f^{-1}(M)$ such that $\rho(x_n,x)<\rho(f^{-1}(y_n),x)+\frac{1}{n}=\frac{1}{k(x)}\delta(y_n,f(x))+\frac{1}{n}\to 0$; i.e. $x\in\overline{f^{-1}(M)}$.

3)	We can define $g:CL(Y)\to CL(X)$ by $g(A)=f^{-1}(A)$. In the following steps we will prove that $g$ is the desired embedding.

4)	$g$ is injective; because $f$ is surjective.

5)	For every $x\in X$ and $A\in CL(Y)$ we have $k(x)\rho(g(A),x)=\delta(A,f(x))$: For every $\epsilon>0$ there is $y\in A$ such that $\delta(A,f(x))+\epsilon>\delta(y,f(x))=k(x)\rho(f^{-1}(y),x)\geq k(x)\rho(g(A),x)$. And for every $\epsilon>0$ there is $x_0\in g(A)$ (i.e. $f(x_0)\in A$) such that $k(x)\rho(g(A),x)+\epsilon>k(x)\rho(x_0,x)\geq\delta(f(x_0),f(x))\geq\delta(A,f(x))$.

6)	$g$ is continuous: Take a net $A_\lambda\in CL(Y)$ such that $A_\lambda\to A\in CL(Y)$. Let $x\in X$. $\rho(g(A_\lambda),x)=\frac{1}{k(x)}\delta(A_\lambda,f(x))\to\frac{1}{k(x)}\delta(A,f(x))=\rho(g(A),x)$ and so $g(A_\lambda)\to g(A)$.

7)	$g$ is closed: Let $\mathcal A$ be a closed subset of $CL(Y)$. Take a net $B_\lambda\in g(\mathcal A)$ such that $B_\lambda\to B\in CL(X)$. $B_\lambda=g(A_\lambda)$ where $A_\lambda\in\mathcal A$ and thus $\delta(A_\lambda,f(x))=k(x)\rho(g(A_\lambda),x)\to k(x)\rho(B,x)$ for every $x\in X$. Put $A=f(B)$. Now we will prove that $A$ is closed and $B=g(A)$. Naturally $B\subset f^{-1}(A)$. For the second inclusion suppose $x\in f^{-1}(A)$. Then $f(x)\in A=f(B)$, i.e. there is $x_0\in B$ satisfying $f(x_0)=f(x)$. Since $k(x)\rho(B,x)\leftarrow\delta(A_\lambda,f(x))=\delta(A_\lambda,f(x_0))\to k(x_0)\rho(B,x_0)$ then $\rho(B,x)=\frac{k(x_0)}{k(x)}\rho(B,x_0)=0$ and hence $x\in B$. Since $B$ is closed we have that $B=f^{-1}(A)=\overline{f^{-1}(A)}=f^{-1}(\overline A)$ and since $f$ is surjective then $A=\overline A$ and $B=g(A)$. It remains to prove that $B\in g(\mathcal A)$. For every $x\in X$ is $\delta(A_\lambda,f(x))\to k(x)\rho(B,x)=k(x)\rho(g(A),x)=\delta(A,f(x))$. Since $f(x)$ runs through all points of $Y$ we have that $A_\lambda\to A$, then $A\in\mathcal A$ and thus $B=g(A)\in g(\mathcal A)$.
\end{proof}

We can apply the above Lemma to the product of metric spaces. Note that many  frequently used definitions of the product metric can be written in the following form: Let $(X,\rho)$ and $(Y,\delta)$ be metric spaces and for $(x_1,y_1),(x_2,y_2)\in X\times Y$ put $\mu((x_1,y_1),(x_2,y_2))=\left\|(\rho(x_1,x_2),\delta(y_1,y_2))\right\|$; where $\left\|\cdot\right\|$ is a norm on $\mathbf R^2$ satisfying $\left\|(a,b)\right\|\geq\left\|(c,d)\right\|$ for $a\geq c\geq 0$ and $b\geq d\geq 0$.
\begin{cor}\label{product}
	Let $(X,\rho)$ and $(Y,\delta)$ be metric spaces. Then $(CL(X),W_\rho)$ and $(CL(Y),W_\delta)$ can be embedded as  closed subsets of $(CL(X\times Y),W_\mu)$, where $\mu$ is defined as above.
\end{cor}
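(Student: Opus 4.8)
The plan is to obtain both embeddings as two applications of Lemma \ref{factor}, one for each coordinate projection. For $(CL(X),W_\rho)$ I would take the ambient space to be $(X\times Y,\mu)$ (playing the role of $(X,\rho)$ in Lemma \ref{factor}), the target to be $(X,\rho)$ (playing the role of $(Y,\delta)$), and let $f$ be the projection $\pi_X\colon X\times Y\to X$, $\pi_X(x,y)=x$, which is surjective since $Y\neq\emptyset$. The role of the coefficient function $k$ will be played by a constant, and pinning down that constant is the only real point to check.

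The key computation is to verify the hypothesis of Lemma \ref{factor}: that there is $k\colon X\times Y\to(0,\infty)$ with $\rho(x_0,\pi_X(x,y))=k(x,y)\,\mu(\pi_X^{-1}(x_0),(x,y))$ for all $(x,y)\in X\times Y$ and all $x_0\in X$. The left-hand side is simply $\rho(x_0,x)$. For the right-hand side, note that $\pi_X^{-1}(x_0)=\{x_0\}\times Y$, so
$$\mu(\pi_X^{-1}(x_0),(x,y))=\inf_{y'\in Y}\bigl\|(\rho(x_0,x),\delta(y',y))\bigr\|.$$
The coordinatewise monotonicity hypothesis on the norm $\|\cdot\|$ (namely $\|(a,b)\|\ge\|(c,d)\|$ whenever $a\ge c\ge0$ and $b\ge d\ge0$) forces the infimum to be attained where the second coordinate is smallest, i.e. at $y'=y$, giving $\delta(y',y)=0$ and value $\|(\rho(x_0,x),0)\|$. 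By positive homogeneity of the norm, $\|(\rho(x_0,x),0)\|=\rho(x_0,x)\,\|(1,0)\|$, so the required identity holds with the constant $k\equiv 1/\|(1,0)\|$, which is finite and strictly positive because $\|(1,0)\|>0$. Lemma \ref{factor} then exhibits $(CL(X),W_\rho)$ as a closed subset of $(CL(X\times Y),W_\mu)$.

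The embedding of $(CL(Y),W_\delta)$ is entirely symmetric: one uses the projection $\pi_Y$ onto $Y$ and the constant $k\equiv 1/\|(0,1)\|$, the only change being which coordinate is driven to zero in the infimum. The main obstacle — such as it is — is purely the observation that the infimum defining the set-to-point distance collapses to a single-coordinate value; everything else is a direct invocation of Lemma \ref{factor}. Since this collapse rests squarely on the two standing assumptions on $\|\cdot\|$, I would be careful to flag explicitly where monotonicity is used (to locate the infimum) and where homogeneity is used (to factor out $\rho(x_0,x)$ and produce a constant $k$).
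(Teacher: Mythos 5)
Your proof is correct and is essentially the paper's own argument: the paper also invokes Lemma \ref{factor} with the coordinate projection as $f$ and the constant $k=\left\|(0,1)\right\|^{-1}$ (treating only the $Y$-embedding, the other case being symmetric, just as you note). The only difference is that you spell out the verification of the hypothesis of Lemma \ref{factor} -- locating the infimum via monotonicity of the norm and factoring out the distance via homogeneity -- which the paper dismisses as ``easy to verify.''
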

\begin{proof}
	It is sufficient to prove it for $(CL(Y),W_\delta)$. For $(x,y)\in X\times Y$ define $f(x,y)=y$ and $k(x,y)=\left\|(0,1)\right\|^{-1}$. It is easy to verify the condition in Lemma \ref{factor} and we have the needed result.
\end{proof}
\begin{exa}
	Let $B(\mathfrak m)$ be the Baire space of the weight $\mathfrak m$ exactly as in \cite[4.2.12]{engelking} and $\sigma$ and $\rho$ be metrics described there. If $W_\rho$ or $W_\sigma$ is normal, then $\mathfrak m=\aleph_0$.
\end{exa}
\begin{proof}
	Let $(D(\mathfrak m),\mu)$ be a discrete metric space with the cardinality $\mathfrak m$ and $0-1$ metric. Then $B(\mathfrak m)=D(\mathfrak m)^{\aleph_0}$. For $\{x_i\},\{y_i\}\in B(\mathfrak m)$ we have
$$\sigma(\{x_i\},\{y_i\})=\sum_{i=1}^{\infty}\frac{\mu(x_i,y_i)}{2^i}$$ and
$$\rho(\{x_i\},\{y_i\})=\left\{
\begin{array}{ll}1/k,&if\ x_k\not=y_k\ and\ x_i=y_i\ for\ i<k\\0,&if\ x_i=y_i\ for\ all\ i.
\end{array}\right.$$
Observe that $B(\mathfrak m)=D(\mathfrak m)\times B(\mathfrak m)$ and $\sigma(\{x_i;i\geq 1\},\{y_i;i\geq 1\})=\mu(x_1,y_1)/2+\sigma(\{x_{i+1};i\geq 1\},\{y_{i+1};i\geq 1\})/2$. So  by \ref{product} we have that $(CL(D(\mathfrak m)),W_\mu)$ can be embedded as a closed subset of $(CL(B(\mathfrak m)),W_\sigma)$ and the rest follows. To show that $(CL(D(\mathfrak m)),W_\mu)$ can be embedded as a closed subset of $(CL(B(\mathfrak m)),W_\rho)$ observe that $D(\mathfrak m)$ is isometrically isomorphic to $Y\subset B(\mathfrak m)$ consisting of all constant sequences. The rest follows from \ref{subset}.
\end{proof}
\begin{cor}\label{parallel}
	Let $(Y,\delta)$ be a metric space with $0-1$ metric. Let $X=\cup\{X_y;y\in Y\}$ where $X_y$ are mutually disjoint and $\rho$ is a metric on $X$ such that for $x_y\in X_y$, $x_z\in X_z$, $y\not=z$  $\rho(x_y,x_z)=1$.  Then $(CL(Y),W_\delta)$ can be embedded as  a closed subspace of $(CL(X),W_\rho)$.
\end{cor}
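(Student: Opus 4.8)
The plan is to derive this corollary directly from Lemma \ref{factor} by exhibiting the right factorization map $f$ and weight function $k$. The natural candidate for $f$ is the projection onto the index set: define $f:X\to Y$ by letting $f(x)$ be the unique $y\in Y$ with $x\in X_y$. This is well defined because the pieces $X_y$ are mutually disjoint, and it is surjective provided each $X_y$ is nonempty (which we may assume, discarding indices with $X_y=\emptyset$). For the weight I would simply take $k\equiv 1$, so that $k:X\to(0,\infty)$ is the constant function $1$. Then $f^{-1}(y)=X_y$, and the whole argument comes down to checking the distance identity required by the lemma.

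With these choices the hypothesis of Lemma \ref{factor} reduces to verifying $\delta(y,f(x))=\rho(f^{-1}(y),x)=\rho(X_y,x)$ for every $x\in X$ and $y\in Y$, and I would check this by splitting into two cases. If $y=f(x)$, then the left side is $\delta(f(x),f(x))=0$; since $x\in X_{f(x)}=f^{-1}(y)$ we also have $\rho(X_y,x)=0$, so both sides vanish. If $y\neq f(x)$, then the left side equals $1$ because $\delta$ is the $0-1$ metric; on the right, every $x'\in X_y$ satisfies $\rho(x',x)=1$ by the hypothesis on cross-piece distances (here $x\in X_{f(x)}$ with $f(x)\neq y$), whence the infimum $\rho(X_y,x)=1$ as well. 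Thus the identity $\delta(y,f(x))=k(x)\rho(f^{-1}(y),x)$ holds, and Lemma \ref{factor} immediately yields that $(CL(Y),W_\delta)$ embeds as a closed subspace of $(CL(X),W_\rho)$.

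The argument is essentially a verification rather than a construction, so there is no serious obstacle; the only points requiring care are the two I just flagged. First, one must use that all cross-piece distances are \emph{exactly} $1$ (not merely bounded below), so that the infimum defining $\rho(X_y,x)$ is genuinely $1$ and matches $\delta(y,f(x))=1$; this is precisely the role of the $0-1$ assumption on $\delta$. Second, one should note that $\rho(X_{f(x)},x)=0$ holds because $x$ itself lies in $X_{f(x)}$, which is what forces the two sides to agree in the case $y=f(x)$ for \emph{any} choice of $k$, and in particular confirms that the constant weight $k\equiv 1$ is admissible.
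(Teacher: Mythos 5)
Your proof is correct and is exactly the paper's approach: the paper's own (one-line) proof of Corollary \ref{parallel} applies Lemma \ref{factor} with the same choices $k\equiv 1$ and $f(x_y)=y$ for $x_y\in X_y$, leaving the two-case distance verification you spelled out to the reader.
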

\begin{proof}
	In Lemma \ref{factor} just check the condition for $f,k$ defined by $k(x)=1$ and $f(x_y)=y$ for $x_y\in X_y$.
\end{proof}
\begin{prop}
	Let $(X,\rho)$ and $(Y,\delta)$ be as in Corollary \ref{parallel}. If $(CL(X),W_\rho)$ is normal and $X_y$ is separable for all $y\in Y$, then $X$ is separable.
\end{prop}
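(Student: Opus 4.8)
The plan is to reduce separability of $X$ to countability of the index set $Y$, and then to extract countability of $Y$ from the normality hypothesis using the machinery already set up in Corollary \ref{parallel} and Lemma \ref{fell}. The first thing I would establish is the equivalence, under the standing assumption that each $X_y$ is separable, between separability of $X$ and countability of $Y$. Since the fibers are mutually at distance $1$, choosing one point from each nonempty fiber produces a $1$-discrete (hence uniformly discrete) set of cardinality $|Y|$; so if $Y$ is uncountable then $X$ has an uncountable uniformly discrete subset and is not separable by Note \ref{notemetric}. Conversely, if $Y$ is countable, pick a countable dense $D_y\subset X_y$ for each $y$ and set $D=\bigcup_{y\in Y}D_y$; this is a countable union of countable sets, hence countable, and it is dense in $X$ because for $x\in X_{y_0}$ and any $r>0$ the ball $S_\rho(x,r)\cap X_{y_0}$ is a nonempty relatively open subset of $X_{y_0}$ and therefore meets the dense set $D_{y_0}\subset D$.

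With that reduction in hand, the core of the argument is an appeal to the embedding results. The hypotheses here on $(Y,\delta)$, the disjointness of the fibers, and the value $\rho(x_y,x_z)=1$ for $y\neq z$, are exactly those of Corollary \ref{parallel}, so that corollary produces a closed embedding of $(CL(Y),W_\delta)$ into $(CL(X),W_\rho)$. Since normality is inherited by closed subspaces and $(CL(X),W_\rho)$ is normal by assumption, it follows that $(CL(Y),W_\delta)$ is normal as well. Now $\delta$ is a $0$-$1$ metric, i.e. a $0$-$\epsilon$ metric with $\epsilon=1$, so Lemma \ref{fell} applies directly and yields that $Y$ is countable. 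Combined with the first step, this gives that $X$ is separable, which is the desired conclusion.

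I do not expect a serious obstacle here, since the heavy lifting has been done in the preceding lemmas; the role of this proposition is to package them. The only point genuinely requiring care is the verification in the first step that a countable index set together with separable fibers actually yields a countable dense subset of $X$: this relies essentially on the fact that distinct fibers are exactly $1$ apart, so that a small ball cannot see more than one fiber and density can be checked fiberwise. Everything else is the routine transfer of normality to a closed subspace followed by a direct citation of Lemma \ref{fell}.
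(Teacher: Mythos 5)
Your proof is correct and follows essentially the same route as the paper: embed $(CL(Y),W_\delta)$ as a closed subspace via Corollary \ref{parallel}, transfer normality to it, apply Lemma \ref{fell} to the $0$-$1$ metric $\delta$ to get $Y$ countable, and conclude that $X$ is a countable union of separable spaces. Your first step is more detailed than necessary (the paper simply notes that a countable union of separable spaces is separable, which needs no appeal to the fibers being at distance $1$, and the converse direction you verify is never used), but the substance is identical.
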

\begin{proof}
	By Corollary \ref{parallel}  $(CL(Y),W_\delta)$ is embedded as a closed subset of $(CL(X),W_\rho)$. Thus $(CL(Y),W_\delta)$ is normal, hence it is countable. $X$ is a countable union of separable spaces, so it is separable.
\end{proof}
	The metric space in \cite[Example 6]{barcos} is of this type. Also a metric space with  $M-$metric is of this type. So we have the following result.
\begin{prop}
	Let $(X,\rho_M)$ be a metric space with $M-$metric. If $(CL(X),\rho_M)$ is normal, then $X$ is countable.
\end{prop}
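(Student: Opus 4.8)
The plan is to recognize an $M$-metric space as an instance of the ``parallel'' construction in Corollary~\ref{parallel} and then invoke the immediately preceding proposition, which says that normality of the hyperspace together with separability of all the pieces $X_y$ forces separability of the whole space. The bulk of the work is simply producing the right decomposition.

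First I would exhibit the partition. For each $z\in X\setminus M$ take the singleton block $\{z\}$, and for each reflection pair in $M$ take the two-element block $\{x,x'\}$ (selecting one representative per pair, so the blocks are pairwise disjoint and cover $X$). Let $Y$ index these blocks, equipped with the $0$-$1$ metric $\delta$. The point to verify is that whenever $x_y$ and $x_z$ lie in distinct blocks one has $\rho_M(x_y,x_z)=1$. This holds because the only distinct points at distance $2$ are reflection pairs $\{x,x'\}$, and by construction these lie inside a single block, while every other pair of distinct points is at distance $1$. Hence $X$ is precisely of the form demanded by Corollary~\ref{parallel}, with this partition and index set $Y$.

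Next I would observe that each block $X_y$ has at most two elements, so it is finite and in particular separable. Thus the hypotheses of the preceding proposition are satisfied: $(CL(X),W_{\rho_M})$ is normal and every $X_y$ is separable. That proposition then yields that $X$ is separable. Finally I would convert separability into countability: since $\rho_M(x,y)\geq 1$ for all distinct $x,y$, the open ball $S(x,1/2)$ equals $\{x\}$, so the topology induced by $\rho_M$ is discrete, and a separable discrete space is countable. Therefore $X$ is countable.

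I do not expect a real obstacle, since the argument is a direct application of the machinery already assembled. The only step requiring genuine care is the block decomposition, namely confirming that the reflection pairs are the sole source of distance-$2$ pairs and that each such pair is confined to a single block, so that all inter-block distances are uniformly $1$ as Corollary~\ref{parallel} requires.
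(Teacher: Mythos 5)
Your proof is correct and takes exactly the route the paper intends: the paper simply remarks that a metric space with $M$-metric ``is of this type'' and applies the preceding proposition, which is precisely your block decomposition into reflection pairs and singletons, followed by the observation that uniform discreteness upgrades separability to countability. Your write-up just supplies the details the paper leaves implicit.
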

\begin{thm}\label{pol}
	Let $(X,\rho)$ be a metric space and $Y$ be a set of points of $X$ with a compact neighborhood. If $Y$ is separable and $(CL(X),W_\rho)$ is normal, then $X$ is separable.
\end{thm}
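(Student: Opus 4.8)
The plan is to argue by contradiction, reducing everything to a separation problem inside the normal space $(CL(X),W_\rho)$ for a carefully chosen pair of disjoint closed sets, in the spirit of the transfinite argument used earlier to prove that normality forces $d(X)\le\gamma$ under a ball-covering hypothesis. First I would pin down the topology of the \emph{locally compact part} $Y$. The set of points of a metric space having a compact neighborhood is open, so $N:=X\setminus Y$ is closed and every point of $N$ fails to have a compact neighborhood; equivalently, no closed ball centred in $N$ is compact. Since $Y$ is separable and locally compact metric it is Lindel\"of, hence $\sigma$-compact, so write $Y=\bigcup_n K_n$ with each $K_n$ compact and $K_n\subset K_{n+1}$. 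Because density is subadditive on subspaces, $d(X)\le d(Y)+d(N)=\aleph_0+d(N)$, so it suffices to prove $d(N)=\aleph_0$. Assuming the contrary, Note \ref{notemetric} furnishes an $\epsilon$-discrete set of cardinality $\aleph_1$ in $X$; as its trace on the separable $Y$ is countable, I may fix an $\epsilon$-discrete set $D=\{x_\alpha;\alpha<\omega_1\}\subset N$. I then work inside the normal hyperspace $(CL(X),W_\rho)$ itself, not in $CL(N)$, so as to avoid comparing $W_\rho$ restricted to $CL(N)$ with $W_{\rho|_N}$ (these topologies need not coincide, since the functionals $\rho(x,\cdot)$ for $x\in Y$ are genuinely extra).

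Next I would set up the separation exactly as in the preceding theorem: put $D_\alpha=\{x_\beta;\beta\in[\alpha,\omega_1)\}$, $\mathcal A=\{\{x\};x\in X\}$ and $\mathcal B=\{D_\alpha;\alpha<\omega_1\}$, check that $\mathcal A,\mathcal B$ are disjoint closed subsets of $CL(X)$, and use normality to obtain a continuous $f:CL(X)\to[0,1]$ with $f(\mathcal A)=\{0\}$ and $f(\mathcal B)=\{1\}$. The target is a recursion of length $\omega$ producing indices $\alpha_0<\alpha_1<\cdots<\omega_1$ and sets $L_n=\{x_{\alpha_n}\}\cup D_{\alpha_{n+1}}$ with $f(L_n)<1/2$, together with $\beta=\sup_n\alpha_n<\omega_1$ for which $L_n\to D_\beta$ in $W_\rho$; since $f(D_\beta)=1$, this is the desired contradiction. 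Both the inductive step (placing a further piece of $D$ inside the basic $W_\rho$-neighborhood $V^-\cap\bigcap_{z\in F}S^+(z,\eta_z)$ of $\{x_{\alpha_n}\}$ supplied by continuity of $f$) and the convergence $L_n\to D_\beta$ reduce to a single combinatorial task: given finitely many closed balls $B(z,\eta_z)$, control the part of $D$ lying in their union.

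The hard part is precisely this task, and it is where local compactness must enter. In the earlier theorem it was solved by the hypothesis that closed proper balls are covered by fewer than $\gamma$ $\epsilon$-balls, which bounds $|B(z,\eta)\cap D|$; here the centres $z$ can lie in $N$, and a ball around a non-locally-compact point need not be totally bounded, so a single $B(z,\eta_z)$ may contain $\aleph_1$ points of $D$ and then \emph{no} tail avoids it. The way I would break this is to treat the two kinds of centres separately. Centres $z\in Y$ are harmless: after shrinking $\eta_z$ one may assume $B(z,\eta_z)\subset K_m$ for some $m$, so $B(z,\eta_z)\cap D$ is a compact $\epsilon$-discrete set, hence finite, and a tail of $D$ avoids it outright. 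For centres $z\in N$ the requirement ``avoid the ball'' is replaced by ``escape within the ball'': non-compactness of $B(z,\eta_z)$ yields a sequence with no cluster point, and I would build $D$ (and choose the sets $L_n$) not as plain tails but as unions of such escaping pieces, so that each finite family of $S^+$-constraints arising from $N$-centres is eventually met by moving far enough along the escaping sequences, while the lower (Vietoris) constraints are preserved by inclusion.

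Reconciling these two mechanisms is the technical crux: one must arrange the recursion so that the \emph{same} sets $L_n$ satisfy all the $S^+$-constraints at every stage and nevertheless converge to a genuine element $D_\beta$ of $\mathcal B$ carrying $f$-value $1$. This is exactly the point at which the failure of nice closed balls on $N$ is indispensable, and it parallels the obstruction to Fell-normality exploited through Lemma \ref{fell} and \cite{hollevpel}; I expect the bookkeeping that keeps the escaping choices mutually consistent across the $\omega$ stages, and that guarantees $\beta<\omega_1$ with $L_n\to D_\beta$, to be the most delicate portion of the argument.
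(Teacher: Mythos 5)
Your proposal is not a proof but a plan whose decisive step you explicitly postpone, and that step is precisely where the argument breaks. The transfinite scheme of the preceding theorem rests on the observation that every closed proper ball meets the $\epsilon$-discrete set $D$ in fewer than $\gamma$ points, so that some tail $D_\mu$ avoids the finitely many balls $B(z,2\eta_z)$; you correctly identify that this fails here, but the proposed replacement (``escaping pieces'') is never constructed, and it is not a routine patch: once the sets $L_n$ are no longer of the form $\{x_{\alpha_n}\}\cup D_{\alpha_{n+1}}$, their limit need not be a member of $\mathcal B$ carrying $f$-value $1$, and even the closedness of $\mathcal B=\{D_\alpha;\alpha<\omega_1\}$ (which you propose to ``check'') was itself a consequence of the ball-covering hypothesis and may fail in your setting. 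Moreover, your treatment of centres $z\in Y$ is wrong: in $S^+(z,\eta_z)$ \emph{decreasing} the radius \emph{enlarges} the basic neighborhood $V^-\cap\bigcap_{z\in F}S^+(z,\eta_z)$, so after ``shrinking $\eta_z$'' it need no longer lie inside the set where $f<1/2$; what the recursion actually needs is a tail of $D$ avoiding the ball of the radius you were \emph{given} (in fact $B(z,2\eta_z)$), and such a ball need not be totally bounded even when $z\in Y$ --- only sufficiently small balls about points of $Y$ are compact, while a large ball centred in $Y$ can contain uncountably many points of $D\subset N$, in which case no tail avoids it. So the $Y$-centres are not harmless, and the unresolved difficulty reappears there as well.

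For comparison, the paper's proof sidesteps this entire bookkeeping problem by a different device, due to Chaber and Pol: choose an $\epsilon$-discrete $T\subset X\setminus Y$ with $|T|=\aleph_1$; since each $t\in T$ has no compact neighborhood, the ball $B_t=B(t,\epsilon/5)$ is non-compact and contains a countable closed discrete set $\{x_{t,n};n\in\omega\}$; then map $u\in\omega^T$ to $g(u)=\{x_{t,u(t)};t\in T\}\cup(X\setminus\bigcup_{t\in T}S_t)$, where $S_t=S(t,\epsilon/4)$. The fixed piece $X\setminus\bigcup_t S_t$ freezes the distance functionals away from the spheres, while inside each $S_t$ the value of $\rho(x,g(u))$ is governed by the single coordinate $u(t)$; this makes $g$ a closed embedding of $\omega^{\aleph_1}$ into $(CL(X),W_\rho)$, and normality then fails by Stone's classical theorem that $\omega^{\aleph_1}$ is not normal. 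Your intuition of ``escaping within non-compact balls'' is exactly the role played by the sequences $\{x_{t,n}\}$, but the paper packages it as a closed copy of a known non-normal space rather than as a Urysohn-function recursion; that packaging is what makes the mutual-consistency bookkeeping you could not complete unnecessary. As it stands, your argument has a genuine gap at its core, and repairing it would in effect require reinventing this embedding.
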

\begin{proof}
	This proof uses an idea of \cite{chapol}. Suppose that $X$ is not separable, then there is $\epsilon>0$ and an $\epsilon-$discrete set $T\subset X\setminus Y$, with $|T|=\aleph_1$. We want to prove that $\omega^{\aleph_1}$ is a closed subset of $(CL(X),W_\rho)$, which is hence not normal. For $t\in T$ put $B_t=B(t,\epsilon/5)$ and $S_t=S(t,\epsilon/4)$. Since $B_t$ is not compact it contains a countable closed discrete set $\{x_{t,n};n\in\omega\}$. Let $\omega^T$ be a space of functions $u:T\to\omega$ with the pointwise topology; i.e. $\omega^T=\omega^{\aleph_1}$. Define $g:\omega^T\to CL(X)$ by $g(u)=\{x_{t,u(t)};t\in T\}\cup(X\setminus\bigcup_{t\in T}S_t)$. The function $g$ is obviously injective and the set $\mathcal F = g(\omega^T)$ is closed in $(CL(X),W_\rho)$. We can use the same idea as in \cite{chapol} to prove that $g:\omega^T\to \mathcal F$ is a homeomorphism, where $\mathcal F$ is considered with the relative Wijsman topology.

\end{proof}


We have the following corollaries:
\begin{cor}
	Let $(X,\rho)$ be a linear metric space. If  $(CL(X),W_\rho)$ is normal, then $X$ is separable.
\end{cor}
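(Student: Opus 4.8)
The plan is to deduce this from Theorem \ref{pol} by exploiting the linear structure to control the set $Y$ of points possessing a compact neighborhood. First I would set $Y=\{x\in X;\ x\text{ has a compact neighborhood}\}$. Because $X$ is a linear metric space, every translation $x\mapsto x+a$ is a homeomorphism, so a compact neighborhood of one point is carried onto a compact neighborhood of any other point; consequently $Y$ is either empty or all of $X$. This dichotomy is the conceptual heart of the argument, and everything else amounts to checking that the hypotheses of Theorem \ref{pol} are met in each case.

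If $Y=X$, then $X$ is locally compact. Invoking the classical theorem that a (Hausdorff) topological vector space is locally compact precisely when it is finite-dimensional, $X$ must be finite-dimensional, and a finite-dimensional metrizable vector space is separable; the conclusion then holds without even using normality.

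If $Y=\emptyset$, then $Y$ is vacuously separable, so Theorem \ref{pol} applies directly. Here I would observe that, since no point of $X$ has a compact neighborhood, for every $t\in X\setminus Y=X$ the closed ball $B(t,\epsilon/5)$ is noncompact—otherwise it would be a compact neighborhood of $t$—which is exactly the property the construction in the proof of Theorem \ref{pol} relies upon. Normality of $(CL(X),W_\rho)$ then forces $X$ to be separable.

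The only nonroutine ingredient beyond Theorem \ref{pol} is thus the local-compactness dichotomy supplied by the vector-space structure together with the finite-dimensionality theorem; I expect no genuine obstacle, the main point to verify being simply that in the case $Y=\emptyset$ every ball $B_t$ appearing in Theorem \ref{pol} is indeed noncompact, so that it contains a countable closed discrete set as that proof requires.
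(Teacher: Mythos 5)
Your proof is correct and follows essentially the same route as the paper: both reduce to Theorem \ref{pol} via the Riesz-type theorem that a locally compact (Hausdorff) topological vector space is finite-dimensional, so that a non-separable linear metric space has no point with a compact neighborhood and $Y=\emptyset$ is vacuously separable. The only difference is organizational --- you argue by the dichotomy $Y=X$ or $Y=\emptyset$ from translation homogeneity, while the paper argues contrapositively (non-separable $\Rightarrow$ infinite-dimensional $\Rightarrow$ $Y=\emptyset$) --- which is the same mathematics.
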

\begin{proof}
	Suppose $X$ is not separable, then it is infinitely dimensional and hence no point has a compact neighborhood. By Theorem \ref{pol}, $(CL(X),W_\rho)$ cannot be normal.
\end{proof}
\begin{cor}
	Let $(X,\left\|.\right\|)$ be a normed linear space and $\rho$ be a metric generated by $\left\|.\right\|$.
If  $(CL(X),W_\rho)$ is normal, then $X$ is separable.
\end{cor}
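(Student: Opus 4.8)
The plan is to recognize the statement as an immediate specialization of the preceding corollary. First I would observe that a normed linear space $(X,\|\cdot\|)$, equipped with the metric $\rho(x,y)=\|x-y\|$, is a linear metric space: the norm-induced metric is translation invariant, and the topology it generates makes addition and scalar multiplication continuous, so $X$ is a metrizable topological vector space. Hence the hypotheses of the previous corollary are satisfied verbatim, and its conclusion gives that normality of $(CL(X),W_\rho)$ forces $X$ to be separable. In this sense the corollary carries no new content beyond being a concrete, frequently used instance.

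If one prefers a self-contained argument that appeals to Theorem \ref{pol} directly rather than quoting the linear-metric-space corollary, I would argue by contradiction. Assume $(CL(X),W_\rho)$ is normal but $X$ is not separable. A finite-dimensional normed space is linearly homeomorphic to some $\mathbf{R}^n$ and is therefore separable; so non-separability forces $X$ to be infinite dimensional. By the classical Riesz theorem an infinite-dimensional normed space is not locally compact, and in fact no point has a compact neighborhood: a compact neighborhood of a single point would, after a translation and dilation, yield a compact neighborhood of $0$ and hence local compactness everywhere. Thus the set $Y$ of points of $X$ possessing a compact neighborhood is empty, and the empty set is trivially separable. Applying Theorem \ref{pol} with this $Y$ then produces separability of $X$, contradicting the assumption.

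I do not expect any genuine obstacle. The substantive work has already been carried out in Theorem \ref{pol} and in the preceding corollary; the only points needing a word of justification are the two standard functional-analytic facts used above — that the norm metric places $X$ in the class of linear metric spaces, and that an infinite-dimensional normed space fails to be locally compact — both of which are classical and require no fresh argument.
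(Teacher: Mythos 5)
Your proposal is correct and takes essentially the same route as the paper: the paper states this corollary without a separate proof precisely because it is an immediate instance of the preceding corollary on linear metric spaces, whose own proof (non-separability implies infinite dimensionality, hence no point has a compact neighborhood, then apply Theorem \ref{pol} with $Y$ empty) is exactly the self-contained argument you spell out. Nothing further is needed.
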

\bigskip
Notice that if $X$ is a locally compact metrizable  space, then there is a compatible metric $\eta$ such that the normality of $(CL(X),W_\eta)$ implies the separability of $X$. In fact, by \cite{beer} there is a compatible metric $\eta$ with nice closed balls. Thus by Corollary 5.8 the normality of $(CL(X),W_\eta)$ implies the separability of $X$.


\end{document}